\documentclass[12pt]{amsart} \setlength{\textwidth}{15cm} \setlength{\textheight}{24cm} \setlength{\voffset}{-1cm} \setlength{\oddsidemargin}{-0.4cm} \setlength{\evensidemargin}{-0.4cm}
\usepackage{amssymb}







\usepackage{amsmath}
\usepackage{amsthm}
\usepackage[T2A]{fontenc}
\usepackage[cp1251]{inputenc}

\newtheorem {thm}{Theorem}[section]

\newtheorem {cor}[thm]{Corollary}
\newtheorem {lem}[thm]{Lemma}
\theoremstyle{definition}
\newtheorem{df}{Definition}

\theoremstyle{remark}
\newtheorem{rem}{Remark}

\numberwithin{equation}{section}

\newcommand{\al}{\alpha}

\newcommand{\om}{\omega}
\newcommand{\omt}{\widetilde{\omega}}



\begin{document}





\title{Zygmund regularity of even  singular integral operators  on domains}


\author{Andrei V.\ Vasin}

\address{Admiral Makarov State University of Maritime and Inland Shipping,
Dwinskaya Street 5/7, St.~Petersburg, 198255, Russia}
\address{St.~Petersburg Department of V.A.~Steklov Mathematical Institute,
Fontanka 27, St.~Petersburg 191023, Russia}
\email{andrejvasin@gmail.com}

\begin{abstract}
Given a   bounded Lipschitz domain $D\subset \mathbb{R}^d,$   a convolution Calder\'{o}n-Zygmund operator $T$ and a growth function $\omega(x)$ of type $n$, we study  what conditions on the boundary of the domain are sufficient for boundedness of the restricted even operator $T_D$  on the  generalized Zygmund space  $C^{\om}_*(D)$.
 Based on a recent T(P) theorem, we prove that this holds if the smoothness of the boundary of a domain $D$ is    by one point, in a sense, greater than  the smoothness of the corresponding Zygmund space $C^{\om}_*(D)$. The main argument of the proof are the higher order gradient estimates of the transform  $T_D\chi_D$ of the characteristic function of a domain with the   polynomial boundary.
\end{abstract}

%
%
\subjclass[2010]{Primary 42B20; Secondary 46E30}
\keywords{singular itegrals, generalized Zygmund classes, growth functions}

\maketitle



\section{Introduction}

\subsection{Backround}
A $C^{k}$ smooth  homogeneous Calder\'{o}n-Zygmund
 operator is a principal
value convolution operator
\[
Tf(y)= PV \int f(x) K(y-x)\, dx,
\]
 where $dx$ denotes  Lebesgue measure in $\mathbb{R}^d$ and
\[ K(x) =\frac{\Omega(x)}{|x|^d },\quad x \neq 0,\]
$\Omega(x)$ is a homogeneous function of degree 0 and  $\Omega(x)$ is $C^k$-differentiable  on
$\mathbb{R}^d \setminus \{0\}$  with zero integral on the unit sphere. The function $K(x)$ is called a Calder\'{o}n-Zygmund kernel.
Given a  domain $D \subset\mathbb{R}^d,$ we consider a modification of $T$. The  operator  defined by the formula
$$T_Df=  (Tf)\chi_D,\quad \mathrm{supp} f\subset \mathrm{clos}(D),$$
 is called  a \textit{restricted} Calder\'{o}n-Zygmund operator.

 The results of the present paper are  motivated by the
the next theorem of Mateu, Orobitg and Verdera  \cite{MOV} (see also Anikonov \cite{An}), who studied Lipschitz regularity
of quasiconformal mappings.
 \begin{thm}[{\cite[Main Lemma]{MOV}}]\label{thm1}
   Let $D$ be a   bounded domain with the  $C^{1+\alpha}$-smooth boundary, $0<\al<1$. Then the restricted Calder\'{o}n-Zygmund operator $T_D$ with an even kernel maps the Lipschitz space $Lip_\alpha(D)$ into itself.
\end{thm}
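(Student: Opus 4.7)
The plan is to reduce the theorem to a regularity result for $T\chi_D$ via the standard identity
\[
T_D f(y) = \int_D \bigl(f(x)-f(y)\bigr) K(y-x)\,dx \;+\; f(y)\, T\chi_D(y).
\]
Splitting an increment of the first summand at $y_1,y_2 \in D$ with $r = |y_1-y_2|$, one gets on one hand a term $(f(y_2)-f(y_1))\, T\chi_D(y_2)$, bounded by $[f]_\al r^\al \|T\chi_D\|_\infty$, and on the other hand $\int_D (f(x)-f(y_1))[K(y_1-x)-K(y_2-x)]\,dx$. The latter is handled by the classical near/far split: the zone $|x-y_1|<2r$ contributes $O(r^\al)$ by absolute integrability of $|x-y_1|^{\al-d}$, and the zone $|x-y_1|\geq 2r$ contributes $O(r^\al)$ via the kernel gradient bound $|K(y_1-x)-K(y_2-x)|\leq C r|y_1-x|^{-d-1}$ integrated against $|x-y_1|^\al$. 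Hence the whole theorem reduces to showing
\[
T\chi_D \in Lip_\al(D),
\]
because then the second summand $f(y) T\chi_D(y)$ is a product of two $Lip_\al$ functions, hence $Lip_\al$.

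The evenness of $K$ enters at this step through the observation that for every open half-space $H$ the function $T\chi_H$ is a constant $c_H$ on $H$. Indeed, after translating the boundary to $\{z_d=0\}$ with $z=y-x$, the homogeneity $K(\la z)=\la^{-d}K(z)$ removes all dependence on the distance of $y$ to $\partial H$, and the identity $PV\int_{\Rbb^d}K=0$ together with $K(-z)=K(z)$ forces the two resulting half-space integrals of $K$ to coincide, hence each to equal a universal constant.

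Localising around $p\in\partial D$, write $\partial D=\{x_d=\ph(x')\}$ with $\ph\in C^{1+\al}$, $\ph(0)=0$, $\nabla\ph(0)=0$, so that $D\triangle H_p$ lies in the cusp $\{|x_d|\leq C|x'|^{1+\al}\}$. Then
\[
T\chi_D(y) - c_{H_p} = T\chi_{D\setminus H_p}(y) - T\chi_{H_p\setminus D}(y).
\]
For $y_1,y_2\in D$ with $r=|y_1-y_2|$ small, pick $p\in\partial D$ with $|y_i-p|\leq Cr$, split the integration into $B(p,Ar)$ and its complement, apply an $L^\infty$ estimate on each $T\chi_D(y_i)-c_{H_p}$ in the near zone (giving $O(r^\al)$ from integrating $|y_i-x|^{-d}$ against the cusp of radius $Ar$), and use the kernel gradient bound in the far zone, where the resulting integral reduces in polar coordinates on the tangent plane to $Cr\int_{Ar}^{\mathrm{diam}(D)} s^{\al-2}\,ds \sim Cr^\al$. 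The fact that $1+\al$ exceeds $\al$ by exactly one degree is what makes this last integral converge with the correct power.

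\textbf{Main obstacle.} The delicate step is proving the half-space identity $T\chi_H\equiv c_H$ and transferring it to the curved domain: the cancellation mixing the principal value with the evenness of $K$ must survive the subtraction at $y_1$ and $y_2$ so that the divergent half-space contributions cancel exactly and only the $C^{1+\al}$ deviation $\ph(x')$ is left to generate the H\"older remainder. This gap of one degree of smoothness between the boundary and the target space is precisely the \emph{by one point greater} phenomenon the paper lifts to the generalised Zygmund setting $\mathcal C^\om_*(D)$.
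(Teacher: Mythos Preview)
The paper does not prove Theorem~\ref{thm1}; it is quoted from \cite{MOV} as background, and your sketch is essentially the original Mateu--Orobitg--Verdera argument: reduce to $T\chi_D\in Lip_\alpha(D)$ via the identity $T_Df=T_D(f-f(y))+f(y)\,T\chi_D$, use evenness to see that $T\chi_H$ is constant on each half-space, and control the H\"older increment of $T\chi_D$ by comparing $D$ with its tangent half-space and integrating over the $C^{1+\alpha}$ cusp.

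The paper's own route to the generalisation (Theorem~\ref{thm4}) is organised differently. Instead of a direct increment estimate it proves the pointwise gradient bound $|\nabla^{n+1}T_D\chi_D(x)|\lesssim \omega(\rho(x))\rho(x)^{-n-1}$ (Theorem~\ref{thm5}; the $n=1$ version is \cite[Proposition~1.10]{V1}) and then feeds this into the $T(P)$ criterion of Theorem~\ref{thm3}. For $\omega(t)=t^\alpha$ the bound reads $|\nabla T_D\chi_D|\lesssim\rho^{\alpha-1}$, and integrating it along a segment recovers exactly the $Lip_\alpha$ membership you aim for; the half-space cancellation you invoke is Lemma~\ref{lem14} together with Lemma~\ref{lem15} (in fact $T\chi_H\equiv 0$, not merely constant). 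What the gradient formulation buys is a mechanism that survives when the flat tangent is replaced by a polynomial graph of degree~$>1$: then $T\chi_W$ is no longer constant, and the paper must expand and estimate $\nabla^nT\chi_W$ term by term (Sections~3.4--3.6), a step the direct increment argument does not obviously accommodate.

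One small gap in your outline: choosing $p\in\partial D$ with $|y_i-p|\le Cr$ presupposes $\rho(y_i)\lesssim r$. When both points lie at distance $\gg r$ from $\partial D$ you need a separate (easier) argument---e.g.\ integrate the bound $|\nabla T\chi_D|\lesssim\rho^{\alpha-1}$ along $[y_1,y_2]$.
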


Observe that    the restricted  Calder\'{o}n-Zygmund operators  are not bounded in the Lipschitz spaces $Lip_\alpha(D)$ for  domains of  general kind. A computation shows this for the Hilbert transform on an interval  or for the Beurling transform on a square (see \cite{MOV}).
Also, one can see that Theorem \ref{thm1} holds when the  $C^{1+\alpha}$ smoothness of the boundary of a domain $D$ is    exactly by one point greater than  the smoothness of the corresponding Lipschitz space $Lip_\alpha(D)$.

  Theorem \ref{thm1} is extended in \cite{V1} to certain  spaces  of zero smoothness between  $Lip_\alpha(D)$ and $BMO(D)$.    Here  we extend Theorem \ref{thm1} for  the higher smoothness  general Zygmund  spaces.

\subsection {Growth functions and smooth domains}

To define  smoothness we 
follow Jansson (see \cite{Ja2}), and   consider general growth functions.
  \begin{df}
 \label{df1}
  A continuous   increasing  function $\omega:[0,\infty)\rightarrow[0,\infty),\, \omega(0)=0$
   is called a  growth function of type $n$, if $n$ is the    positive integer   such that   the  following two  regularity properties are satisfied:
\begin{enumerate}
  \item  For some $q$, $ n<q<n+1$ the function $\frac{\om(t)}{t^q}$ is \textit{almost decreasing}, that is there exists a positive constant $C=C(q)$ such that
  \begin{equation}\label{eq:eq42}
 \omega(st)<Cs^q\omega(t),\;s>1.
\end{equation}
  \item For each  $r$, $r<n$, the function $\frac{\om(t)}{t^r}$ is \textit{almost increasing}, that is there exists a positive constant $C=C(r)$ such that
     \begin{equation}\label{eq:eq32}
 \omega(st)<Cs^{r}\omega(t), \;s<1.
\end{equation}
\end{enumerate}
            \end{df}

\begin{df}
 \label{df3}
 A bounded domain $D\subset  \mathbb{R}^d$ is called  a $(\delta, R)$-Lipschitz domain if, for  each  point $a\in\partial D$, there exists a function $A:\mathbb{R}^{d-1}\rightarrow \mathbb{R}$ with $\|\nabla A\|_\infty\leq \delta$  and a cube $Q$ with side length $R$ and centre $a$ such that, after a suitable shift and rotation, that sends $a$ to  origin,  one has
 $$D\cap Q=\{ (x',x_d)\in (\mathbb{R}^{d-1}, \mathbb{R})\cap Q:x_d>A(x')\}.$$
The cube $Q$ is called an $R$-window of the domain. Without risk of confusion, we omit parameters $\delta$ and $R$, and consider  Lipschitz domains in general.

 Given  a   growth function   $\omega$ of type $n$,      a bounded Lipschitz domain
$D$ is called a $C^{\om}$  domain if    the function $A$ from the definition satisfies the following condition:
there exists a polynomial $P(x')$ of order $n$ such that
     \[ |A(x')-P(x')|\lesssim_D \omega(|x'|), \:x=(x',x_d)\in Q .\]
\end{df}
\begin{rem}
     For a growth function  such that $\om(t)= o(t^n)$ the parametrization $A$ is  of class $C^{n}$ and $P$ is the corresponding Taylor polynomial. In general case of a  type $n$ growth function (e.g. $\om(t)=t^n$), the parametrization  is of class $C^{n-1}$ only.
\end{rem}

\subsection {Regularity of the transform of the characteristic function}

In our first theorem we relate the smoothness of the boundary $\partial D$ with the gradient estimates of  $T_D\chi_D$ near the boundary.
\begin{thm}
\label{thm5}
    Let $\om$ be a growth function of  type $n>1$, let $D\subset \mathbb{R}^d$ be a   $C^{\om}$ domain and let $T$ be  a $C^{2n-1}$-smooth  homogeneous Calder\'{o}n-Zygmund operator with an even kernel.  Then
  \begin{equation}
 \label{eq:eq222}
  | \nabla ^{n}T_D\chi_D(x)|\lesssim_{D, K} \frac{\om(\rho(x))}{\rho(x)^{n+1}},
    \end{equation}
    where $\rho(x)$ is distance from $x\in D$ to the boundary $\partial D$.
\end{thm}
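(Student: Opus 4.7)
The strategy is to localize near $\partial D$ and compare $\chi_D$ to a model whose boundary is exactly polynomial, where the evenness of $K$ can be fully used. Fix $x_0\in D$ with $\rho(x_0)=r$ and a closest point $a\in\partial D$; choose coordinates so $a=0$ and in a fixed window $Q$ of side $R$ centred at $a$ one has $D\cap Q=\{y_d>A(y')\}$ with $|A(y')-P(y')|\lesssim\omega(|y'|)$ for the polynomial $P$ of order $n$ supplied by Definition~\ref{df3}. Let $D_P=\{y_d>P(y')\}$ and write
\[
\chi_D=\chi_{D_P}+(\chi_D-\chi_{D_P})\chi_Q+\chi_{D\setminus Q}-\chi_{D_P\setminus Q}.
\]

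The two $Q^c$-supported pieces sit at distance $\gtrsim R$ from $x_0$, so $|\nabla^n K(x_0-y)|\lesssim R^{-d-n}$ there and their integrated contribution to $\nabla^n T\chi_D(x_0)$ is $O(1)$; this is dominated by $\omega(r)/r^{n+1}$ because $\omega(t)/t^{n+1}$ is almost decreasing by condition (1) of Definition~\ref{df1}. The symmetric-difference piece $(\chi_D-\chi_{D_P})\chi_Q$ is supported where $y_d$ lies between $A(y')$ and $P(y')$, a strip of vertical thickness $\lesssim\omega(|y'|)$, so its contribution is controlled by
\[
\int_{|y'|\leq R}\frac{\omega(|y'|)}{(|y'|^2+r^2)^{(d+n)/2}}\,dy'.
\]
Substituting $y'=ru$ and splitting $|u|\leq 1$ vs.\ $|u|>1$ using the almost-increase of $\omega(t)/t^{r_0}$ and almost-decrease of $\omega(t)/t^{q}$ ($r_0<n<q<n+1$) yields the desired $\lesssim\omega(r)/r^{n+1}$.

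The main term is $\nabla^n T\chi_{D_P}(x_0)$, and here the evenness of $K$ is decisive. A direct polar computation using $\Omega(-\sigma)=\Omega(\sigma)$ and $\int_{S^{d-1}}\Omega\,d\sigma=0$ shows that $T\chi_H$ is constant on the half-space $H=\{y_d>0\}$, whence $\nabla^n T\chi_H\equiv 0$. The volume-preserving straightening $\Phi(y',y_d)=(y',y_d-P(y'))$ maps $D_P$ onto $H$, and in the new variables $T\chi_{D_P}(x)=\widetilde T\chi_H(\Phi(x))$ with
\[
\widetilde K(\tilde x,\tilde y)=K\bigl((\tilde x-\tilde y)',(\tilde x-\tilde y)_d+P(\tilde x')-P(\tilde y')\bigr).
\]
Taylor-expanding $\widetilde K$ in its last argument around the convolution kernel $K(\tilde x-\tilde y)$, the zeroth-order term reproduces $T\chi_H$ (killed by evenness); the remaining Calder\'on-commutator-type terms, of the form $[P(\tilde x')-P(\tilde y')]^k\partial_d^k K(\tilde x-\tilde y)$, are estimated using the $C^{2n-1}$-smoothness of $\Omega$ and the polynomial inequality $|P(\tilde x')-P(\tilde y')|\lesssim(|\tilde x'|+|\tilde y'|)^{n-1}|\tilde x'-\tilde y'|$, yielding $|\nabla^n T\chi_{D_P}(x_0)|\lesssim 1$, again dominated by $\omega(r)/r^{n+1}$.

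The principal obstacle is precisely this last bound. Each commutator term requires its own integration-by-parts, converting the unbounded volume integral over $D_P$ into a boundary integral on the polynomial graph $\partial D_P$ with careful management of the polynomial growth of $P$ inside the window, and enough Taylor orders must be retained so that the remainder is controlled by the kernel regularity $C^{2n-1}$. Without evenness the best available bound would be the trivial $\lesssim r^{-n}$, which exceeds $\omega(r)/r^{n+1}$ whenever $\omega(r)/r\to 0$ (automatic for $\omega$ of type $n>1$); hence the evenness of $K$ is genuinely essential.
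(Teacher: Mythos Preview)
Your decomposition into far pieces, the $A$-vs-$P$ strip, and the polynomial model is exactly the paper's, and your treatment of the first two is essentially identical to Sections~3.2--3.3. The divergence is only in the polynomial-model term $\nabla^{n}T\chi_{D_P}(x_0)$, and there your plan has a real gap.

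After your volume-preserving straightening $\Phi$, you Taylor-expand the nonconvolution kernel and obtain, for $k\ge 1$, the commutator integrals $\int_{H}(\Delta P)^{k}\,\partial_d^{k}K(\tilde x-\tilde y)\,d\tilde y$ over the full half-space $H$. These integrals are divergent at infinity: at $\tilde x'=0'$ one has $|(\Delta P)^{k}|\approx |P(\tilde y')|^{k}\lesssim |\tilde y'|^{nk}$ while $|\partial_d^{k}K|\lesssim|\tilde y|^{-d-k}$, giving an integrand of size $|\tilde y|^{(n-1)k-d}$, never integrable over $H$ for $n\ge 2,\,k\ge 1$. Integrating by parts to the boundary $\partial H=\mathbb{R}^{d-1}$ does not help either: the resulting hyperplane integrals $\int_{\mathbb{R}^{d-1}}(-P(\tilde y'))^{k}\partial_d^{k-1}K(-\tilde y',\delta)\,d\tilde y'$ are still not absolutely convergent. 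So the expansion is only formal, and the ``$\lesssim 1$'' you announce cannot be obtained by the size estimate $|\Delta P|\lesssim(|\tilde x'|+|\tilde y'|)^{n-1}|\tilde x'-\tilde y'|$ alone. The paper sidesteps this by never leaving the bounded window: it works with $W_Q=D_P\cap Q$, subtracts the (vanishing, by Lemma~\ref{lem15}) half-space contribution, and is left with the finite strip integral $J=-\int_{|x'|<r_0}\!\int_{0}^{P(x')}\partial_x^{n}K(x',x_d-\delta)\,dx_d\,dx'$. A Taylor expansion of the inner integral in $x_d$ then produces terms $\partial_d^{k-1}\partial_x^{n}K(x',-\delta)\,P(x')^{k}$ on the bounded set $\{|x'|<r_0\}$.

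The second point you miss is that even on the bounded window, a pure size bound on these terms gives only $O\bigl(\log(1/\rho(x_0))\bigr)$, not $O(1)$. The paper accepts $\log(1/\delta)$ (Section~3.6) for the high-degree part $T_{n+k}$ of $P^{k}$, which is enough since $\log(1/\delta)\lesssim\omega(\delta)/\delta^{n+1}$. For the low-degree part $Q_{n+k}$, however, it uses the evenness of $K$ again, now through Lemma~\ref{lem16}: each homogeneous piece $q_i(x')\partial^{n+k-1}K(x',-\delta)$ is, by Lemma~\ref{lem17}, itself an even Calder\'on--Zygmund kernel restricted to the hyperplane $x_d=-\delta$, and therefore has vanishing integral over $\mathbb{R}^{d-1}$. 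Your plan invokes evenness only for the zeroth term $T\chi_H$; the additional hyperplane cancellation for the $k\ge 1$ terms is the step you would still need to supply, and it is what replaces the change of variables in the paper's argument.
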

For   $n=1$,  the result is obtained in \cite[Proposition 1.10]{V1}.
The  known argument \cite{An, MOV} to estimate $T_D\chi_D$ is to apply  an extra cancellation property of  even convolution Calder\'{o}n-Zygmund operators. This extra cancellation property means particularly that a derivative of the transform  of the characteristic
function of a half-space is zero out of its boundary.
For higher order of smoothness, $n>1$, we  need
to approximate the boundary of the domain by polynomial graphs instead of
hyperplanes.  Since a derivative of the  transform of the characteristic function of
a domain bounded by a polynomial graph of degree greater than one is not
zero anymore in general, the proof is more complicated.  In Section 3 we   obtain the higher order gradient  estimates $|\nabla ^{n}T_D\chi_D(x)|$ for the domains bounded by  polynomial graphs.
To do this job we will find  a new form of  extra cancellation property in  Lemma \ref{lem16}, which is suitable for the  polynomial domains in $\mathbb{R}^d$.

The results proved are related not only to  cited \cite{MOV}, but also to  Crus and Tolsa \cite{CT, T}, where the authors research Sobolev regularity of  the Beurling
transform $B$ of the characteristic function $\chi_D$ of a Lipschitz domain $D \in \mathbb{R}^2$. They obtained that the outward unit normal $N$ to the boundary,  and the trace of  $B\chi_D$ belong to the same Besov space $B^{\al-1/p}_{p,p}(\partial D)$.
 The results on Sobolev regularity in \cite{CT, T} of  the Beurling
transform $B$ of the characteristic function $\chi_D$ were extended on higher orders of smoothness by Prats \cite{Pr}. The author used there   one complex variable approach.

Note that the dependence on the smoothness of the boundary for estimate (\ref{eq:eq222})   is sharp.  It is  proved in  \cite{V1}  in a  case of the Beurling transform on plane domains and the  Dini regular growth function for $n=1$. For the case of Sobolev regularity it is proved in  \cite{ T}
In the present paper we do not consider  the sharpness (\ref{eq:eq222}).   Our approach to obtain  the gradient estimates  (\ref{eq:eq222})   assumes    $C^{2n-1}$ smoothness of  the kernel $K$. In any case, one may consider  the
 restricted Riecz multipliers as the  operator $T_D$.

\subsection {Zygmund spaces}

Let $dx$ denote Lebesgue measure in $\mathbb{R}^d.$ Let $Q$ be a cube in $\mathbb{R}^d$ with edges parallel to coordinate axes, let $|Q|$ denote the volume of $Q$ and let $\ell=\ell(Q)$ be  its side length. Also, let $\mathcal{P}_n$ be the space of polynomials of degree at most  $n$.
\begin{df}
 \label{df2}
  Given a growth function $\omega$ of    type $n>0$, the homogeneous generalized  Zygmund space $C_*^{\om}(D)$  in a domain $D\subset  \mathbb{R}^d$ consists of those $f\in L^1_{loc}(D,dx)$ for which the  seminorm
\begin{equation}\label{eq:eq1}
   \|f\|_{\omega,D}=\sup_{Q\subset D} \inf_{P\in\mathcal{P}_n}\frac{1}{\omega(\ell)} \|f-P\|_{L^{\infty} (Q, dx/|Q|)}.
\end{equation}
is finite.
\end{df}
\begin{rem}
It clearly holds $C^{\om}(D)\subseteq C_*^{\om}(D)$. Particularly,
  for a  type $n\geq 1$  growth function $\om(t)=t^{s}$ with real $s$, $n<s<n+1$, both the spaces   coincide, and are called  the H\"older space $C^{s}(D)$ of order $s$.
On the other hand,
   for  $\om(t)=t^{n}$ with positive integer $n$,  one has the strict embedding, where $C_*^{\om}(D)$ is the classical  Zygmund space  $C^{n}_*(D)$, while   $C^{\om}(D)$ is the Lipschitz of order $n$ space  $C^{n-1,1}(D)$.
\end{rem}
Recently ( \cite{VD}),  it was proved   a criteria  of the boundedness of the  Calder\'{o}n-Zygmund operator  on   the Zygmund space.
To formulate the result,  define  the  associated  growth function as following
 \begin{equation}
 \label{eq:eq2}
   \widetilde{\omega}(x)= \frac{\omega(x)}{\max \{1,\int_x^1 \omega(t)t^{-n-1}dt\}}.
    \end{equation}

\begin{thm}[{\cite[Theorem 1.3]{VD}}]\label{thm3}
 Let $\omega$ be a   growth function  of type $n\in \mathbb{N}$  and let $D\subset \mathbb{R}^d$ be a  bounded Lipschitz domain. Let $T$ be  a $C^{n+1}$-smooth  homogeneous Calder\'{o}n-Zygmund operator. Then the  restricted operator $T_D$ is bounded on the  space $C_*^{\om} (D)$ if and only if two following properties have place:
\begin{enumerate}
  \item  $T_DP \in C_*^{\om} (D)$ for any polynomial $P\in\mathcal{P}_n(D)$.
  \item For any cube $Q \in D$ with the centre $x_0$ and  for any  homogeneous  polynomial $P_{x_0}(x)= P(x-x_0)$ of degree $n$, there exists a polynomial  $S_Q\in\mathcal{P}_n(D)$ such that
     \[\|T_DP_{x_0}-S_Q\|_{L^1(Q, dx/|Q|)}\leq C \|P\| \omt(\ell) \]
  with a constant $C$ independent of  $Q$.
\end{enumerate}
\end{thm}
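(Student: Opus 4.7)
My plan is to treat the two implications separately. \textbf{Necessity} is almost formal once one observes that every $P\in\mathcal{P}_n(D)$ has vanishing seminorm $\|P\|_{\om,D}=0$, since on each cube $Q\subset D$ one may take $P$ itself as the approximant. If the restricted operator $T_D$ is bounded on $C_*^{\om}(D)$, then $\|T_DP\|_{\om,D}\leq C\|P\|_{\om,D}=0$, so on every cube $Q$ the function $T_DP$ admits polynomial approximants with arbitrarily small error. This gives condition (1) immediately, and (2) follows because any prescribed majorant $C\|P\|\omt(\ell)$ beats an infimum equal to zero. The $L^1$-average on the left-hand side of (2) is even weaker than the $L^\infty$-norm involved in the seminorm, so nothing is lost.

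For \textbf{sufficiency} I would argue by a multi-scale telescoping decomposition. Fix $f\in C_*^{\om}(D)$ with $\|f\|_{\om,D}\leq 1$ and a cube $Q=Q_0\subset D$ centred at $x_0$ of side $\ell$. Form the concentric dilates $Q_j$ of side $2^j\ell$, stopping at the largest $J$ with $2^J\ell$ still comparable to $\mathrm{diam}(D)$, and pick $P_j\in\mathcal{P}_n$ realising the near-best approximation of $f$ on $Q_j$, all expanded around the common point $x_0$. Telescope
\[
f=(f-P_0)+P_J+\sum_{j=0}^{J-1}(P_j-P_{j+1}),
\]
apply $T_D$, and estimate each piece on $Q$. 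The local error $T_D(f-P_0)$ is handled via $L^2$-boundedness of the localised piece $(f-P_0)\chi_{2Q_0}$, producing a contribution $\lesssim \om(\ell)$; the non-local tail is treated by Taylor expanding $K(y-\cdot)$ at $x_0$ to order $n$, the resulting polynomial in $y$ being absorbed into $S_Q$ and the remainder controlled by the standard estimates on $|\nabla^{n+1}K|$. The term $T_DP_J$ is handled by condition (1): the linear map $\mathcal{P}_n\to C_*^{\om}(D)$, $P\mapsto T_DP$, is automatically bounded by finite-dimensionality, so $T_DP_J$ has a polynomial approximant on $Q$ with error $\lesssim \om(\ell)\|P_J\|\lesssim \om(\ell)$.

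The heart of the argument is the scale-by-scale analysis of $T_D(P_j-P_{j+1})$. Expanding around $x_0$, split $P_j-P_{j+1}=R_j+H_j$ into its part $R_j$ of degree strictly less than $n$ and its homogeneous degree-$n$ part $H_j$. Condition (1) together with finite-dimensionality bounds the contribution of $T_DR_j$ by $\om(\ell)$ times the coefficient norm of $R_j$, while condition (2) gives a polynomial approximation of $T_DH_j$ on $Q$ with error $\|H_j\|\,\omt(\ell)$. Poincar\'e-type estimates on $Q_j$ yield coefficient bounds of order $\om(2^j\ell)/(2^j\ell)^k$ for the degree-$k$ part, and summation of the dominant top-degree terms produces
\[
\omt(\ell)\sum_{j=0}^{J}\frac{\om(2^j\ell)}{(2^j\ell)^n} \asymp \omt(\ell)\int_{\ell}^{\mathrm{diam}(D)}\om(t)\,\frac{dt}{t^{n+1}} \lesssim \om(\ell),
\]
exactly by the definition (\ref{eq:eq2}) of $\omt$. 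The main obstacle, and the raison d'\^etre of the modified growth function, is precisely this scale-summation: without the logarithmic correction encoded in $\omt$ the sum would diverge, and the matching between (2) and (\ref{eq:eq2}) is what the $T(P)$ criterion is really encoding. The lower-degree pieces require the almost-increasing property (\ref{eq:eq32}) of $\om(t)/t^r$ for $r<n$ to give a geometrically convergent series, and a delicate bookkeeping point is that the $P_j$ must all be centred at the common point $x_0$ rather than at the centres of the respective $Q_j$, so that the top parts $H_j$ are genuine homogeneous polynomials at $x_0$ fitting the hypothesis of (2).
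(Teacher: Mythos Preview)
The paper does not contain a proof of Theorem~\ref{thm3}; the result is quoted from \cite{VD} and then \emph{applied}, not proved, in Section~4 to establish Theorem~\ref{thm4}. There is therefore no in-paper argument against which to compare your attempt.

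For what it is worth, your sufficiency outline follows the standard $T(P)$ architecture --- multiscale telescoping of $f$ over the chain of dilates $Q_j$, splitting each $P_j-P_{j+1}$ (expanded about the common centre $x_0$) into its top-degree homogeneous part $H_j$ and its lower-degree remainder $R_j$, handling $T_DR_j$ through condition~(1) plus finite-dimensionality and $T_DH_j$ through condition~(2), and recognising that the resulting scale-sum $\omt(\ell)\sum_j \om(2^j\ell)(2^j\ell)^{-n}$ collapses to $\om(\ell)$ precisely by the definition~(\ref{eq:eq2}) of $\omt$. This is indeed the mechanism behind the result in \cite{VD}, and your identification of $\omt$ as the exact gauge that makes the top-degree series summable is the essential point.

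One caution on necessity: your step $\|T_DP\|_{\om,D}\le C\|P\|_{\om,D}=0$ presupposes that ``bounded on $C_*^{\om}(D)$'' means the homogeneous seminorm inequality. If boundedness is instead taken with respect to a genuine Banach norm (seminorm plus, say, $\|\cdot\|_{L^\infty(D)}$), then for $P\in\mathcal P_n$ one only obtains $\|T_DP_{x_0}\|_{\om,D}\le C$, which on a cube $Q$ yields a polynomial approximant with error $\lesssim\om(\ell)$ rather than $\lesssim\omt(\ell)$. Since $\omt\le\om$ (strictly, in the non-Dini case), this does not directly give condition~(2), and a further argument is needed. Whether this is a real gap depends on the convention adopted in \cite{VD}, which the present paper does not spell out.
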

\begin{rem} These type of arguments were refered by   Prats and Tolsa \cite {PT} as T(P) theorem
to indicate explicitly that the corresponding characterization uses values
of the operator $T$ on the polynomials of appropriate degree.
\end{rem}
\begin{rem}If a  growth function $\omega$ of type $n$ is Dini  regular,  that is,   the integral
 \[\int_0 \omega(t)t^{-n-1}dt\]
 converges, then   $\widetilde{\omega}(x)$ is equivalent to $\omega(x)$. In this case the second condition in the theorem  follows from the first one and may be omitted.
 \end{rem}
Checking the conditions  of  Theorem \ref{thm3}, we obtain the main result, where  we extend Theorem \ref{thm1} to the higher smoothness Zygmund spaces. Given a type $n$  growth function $\om(t)$, we denote by   $C^{\om,1}$ domain $D$ for the type $n+1$ growth function $t \om(t)$.
\begin{thm}
\label{thm4}
Let $\om$ be a growth function of   type $n$, $n\geq 1$,  and let  $D\subset \mathbb{R}^d$ be a  bounded $C^{\om,1}$ domain.  Let $T$ be  a  $C^{2n+1}$-smooth  homogeneous Calder\'{o}n-Zygmund operator with an even kernel. Then  $T_D$ is bounded on the  space  $C_*^{\om} (D)$.
\end{thm}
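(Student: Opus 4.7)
The plan is to reduce Theorem \ref{thm4} to the T(P) criterion of Theorem \ref{thm3}. It suffices to show that (i) $T_DP \in C_*^{\om}(D)$ for every $P \in \mathcal{P}_n$, and (ii) for every cube $Q \subset D$ of side $\ell$ centered at $x_0$ and every homogeneous polynomial $P_{x_0}(x)=P(x-x_0)$ of degree $n$, there is a polynomial $S_Q\in\mathcal{P}_n$ with
\[
\|T_DP_{x_0}-S_Q\|_{L^1(Q,\,dx/|Q|)} \lesssim \|P\|\,\omt(\ell).
\]
The single extra point of boundary smoothness built into the $C^{\om,1}$ hypothesis is what will allow us to feed Theorem \ref{thm5} with the right growth function and obtain enough derivatives for the T(P) test.

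The backbone estimate is produced by viewing $D$ as a $C^{\omt}$-domain for the growth function $\omt(t)=t\om(t)$ of type $n+1$. Since $n+1\geq 2>1$ and $T$ is $C^{2(n+1)-1}=C^{2n+1}$-smooth with an even kernel, Theorem \ref{thm5} applied with $n+1$ in place of $n$ gives
\[
|\nabla^{n+1}T_D\chi_D(x)|\lesssim_{D,K}\frac{\omt(\rho(x))}{\rho(x)^{n+2}}=\frac{\om(\rho(x))}{\rho(x)^{n+1}},\qquad x\in D.
\]
I would then extend this from $\chi_D$ to $T_DP$ for arbitrary $P\in\mathcal{P}_n$ by expanding $P$ in its exact Taylor series around the evaluation point $y$,
\[
T_DP(y)=\sum_{|\al|\leq n}\frac{P^{(\al)}(y)}{\al!}\,J_\al(y),\qquad J_\al(y)=\int_D(x-y)^{\al}K(y-x)\,dx.
\]
The $\al=0$ term equals $P(y)\,T_D\chi_D(y)$ and inherits the backbone bound; for $|\al|\geq 1$ the $J_\al$ are Riesz-type potentials of $\chi_D$ with kernel of homogeneity $|\al|-d$, strictly more regular than $T_D\chi_D$. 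Summing yields $|\nabla^{n+1}T_DP(y)|\lesssim\|P\|\,\om(\rho(y))/\rho(y)^{n+1}$ for every $y\in D$.

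Condition (i) then follows by choosing, for each cube $Q\subset D$ of side $\ell$ centered at $x_Q$, the degree-$n$ Taylor polynomial $S_Q$ of $T_DP$ at $x_Q$. The Taylor remainder, the gradient bound, and the almost-decreasing property of $\om(t)/t^q$ with $n<q<n+1$ (Definition \ref{df1}) combine to give $\|T_DP-S_Q\|_{L^\infty(Q)}\lesssim\om(\ell)$; cubes abutting $\partial D$ are handled by a preliminary Whitney splitting against the boundary. For condition (ii) we take $S_Q$ to be the degree-$n$ Taylor polynomial of $T_DP_{x_0}$ at $x_0$; the $L^1(Q,dx/|Q|)$-norm of the remainder is controlled, via the integral form of the Taylor remainder and a dyadic Whitney stratification of $Q$ by $\rho(\cdot,\partial D)$ followed by Fubini, by a constant multiple of $\|P\|\,\omt(\ell)$, with the integral transform in \eqref{eq:eq2} entering precisely through this stratification.

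The main obstacle is condition (ii). The gradient bound $\om(\rho)/\rho^{n+1}$ is singular at $\partial D$, so a naive $L^\infty$ estimate over $Q$ would only deliver the cruder factor $\om(\ell)$. Extracting the sharper $\omt(\ell)$ is a careful game of integrating $\rho^{-n-1}$ against $\om(\rho)$ over Whitney shells inside $Q$; it is exactly this that converts the pointwise gradient estimate into the integral transform appearing in the denominator of \eqref{eq:eq2}. A secondary technical point is the control of the Riesz-type potentials $J_\al$ with $|\al|\geq 1$ on a $C^{\om,1}$ domain, but these are strictly milder than the singular case $|\al|=0$ already handled by Theorem \ref{thm5}.
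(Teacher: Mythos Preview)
Your overall architecture—reducing to the $T(P)$ criterion of Theorem \ref{thm3} and invoking Theorem \ref{thm5} at level $n+1$ on the $C^{\om,1}$ domain to get $|\nabla^{n+1}T_D\chi_D|\lesssim\om(\rho)/\rho^{n+1}$—is the same as the paper's. The handling of the terms $J_\al$ with $|\al|\ge 1$ is sketchy but salvageable: the paper makes this precise by noting that $\partial^{|\al|}\bigl((\cdot)^\al K\bigr)$ is again an even Calder\'on--Zygmund kernel (Lemma \ref{lem17}) and then re-applying Theorem \ref{thm5} with a lower-type power growth function (Lemma \ref{lem 10}), obtaining in fact the stronger bound $\rho^{q-n-1}$ for those pieces.

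The genuine gap is in your verification of condition (2). Your uniform bound $|\nabla^{n+1}T_DP_{x_0}|\lesssim\om(\rho)/\rho^{n+1}$, combined with Taylor's formula, yields only $\om(\ell)$, as you yourself acknowledge. The proposed fix—a Whitney stratification of $Q$ in the variable $\rho(\cdot,\partial D)$ followed by Fubini—cannot recover $\omt(\ell)$: it suffices to check (2) on cubes with $2Q\subset D$, and on such cubes $\rho\approx\ell$ throughout $Q$, so there is nothing to stratify and no integral transform appears. Concretely, take $\om(t)=t^n$; your estimate gives $\ell^n$ while the target is $\omt(\ell)\approx\ell^n/\log(1/\ell)$, and no averaging over a region where $\rho\approx\ell$ will manufacture the missing logarithm.

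The paper obtains $\omt(\ell)$ by a different mechanism: it exploits that $P_{x_0}$ vanishes to order $n$ at $x_0$. Writing $P_{x_0}(y)=\sum S'(x-x_0)\,S''(y-x)$ and applying Lemma \ref{lem 10} termwise yields a \emph{refined} pointwise bound on $Q$ of the form $|\nabla^{n+1}T_DP_{x_0}(x)|\lesssim\rho(x_0)^{q'-n-1}+\text{(lower order)}$ (Lemma \ref{lem 20} and \eqref{eq:eq63}). A short elementary computation then shows $\omt(t)\gtrsim t^{q'}$ for a suitable $q'\in(n,n+1)$ (Lemma \ref{lem47}), whence $\rho(x_0)^{q'-n-1}\lesssim\omt(\rho(x_0))/\rho(x_0)^{n+1}$, and Taylor's formula now delivers $\omt(\ell)$ directly. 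In short, the passage from $\om$ to $\omt$ comes not from integration over $Q$ but from a sharper gradient estimate that uses the centering of $P_{x_0}$ at $x_0$; this is the idea missing from your outline.
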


  For the classical Zygmund and the H\"older spaces  Theorem \ref{thm4}   states the next smoothness drop by one point.
\begin{cor}
For a positive integer $n$,     let $T$ be  a  $C^{2n+1}$-smooth  homogeneous Calder\'{o}n-Zygmund operator with an even kernel.
Then
\begin{enumerate}
  \item   $T_D$ is bounded on the  H\"older space  $C^{s} (D)$, provided $D$ is $C^{s+1}$ domain, $n<s<n+1$;
  \item $T_D$ is bounded on the original Zygmund space $ C^{n}_*(D)$, if  the relevant  domain $D$ has  the   $C^{n,1}$  boundary.
\end{enumerate}
\end {cor}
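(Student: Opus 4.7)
The plan is to observe that both statements are direct specializations of Theorem~\ref{thm4} to concrete growth functions, so the work reduces to reconciling the notation of the generalized Zygmund spaces and $C^{\om,1}$ domains with the classical H\"older and Zygmund setups.

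For part~(1), I would take $\om(t)=t^s$ with $n<s<n+1$. This is a growth function of type $n$ in the sense of Definition~\ref{df1}: the almost monotonicity conditions reduce to honest monotonicity of pure powers for any $r<s<q$. By the Remark following Definition~\ref{df2}, one has $C^{\om}_*(D)=C^{s}(D)$. The associated product $t\om(t)=t^{s+1}$ is a growth function of type $n+1$, so a $C^{\om,1}$ domain is, by definition, one whose boundary admits at each point a local parametrization $A$ together with a polynomial $P$ of degree $n+1$ satisfying $|A(x')-P(x')|\lesssim |x'|^{s+1}$ on the window. Since any monomial of degree $n+1$ is itself bounded by $C|x'|^{n+1}$ on a bounded window, it can be absorbed into the error term, and one may take $\deg P=n$. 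A standard Taylor/Campanato-type argument then identifies this with $A\in C^{n+1,s-n}$, which is the classical H\"older class $C^{s+1}$ (recall $s+1\in (n+1,n+2)$ is non-integer). The kernel regularity $C^{2n+1}$ required in the Corollary matches exactly the hypothesis of Theorem~\ref{thm4} for a type-$n$ growth function, so Theorem~\ref{thm4} applies and yields the boundedness of $T_D$ on $C^{s}(D)$.

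For part~(2), I would take $\om(t)=t^n$ with $n$ a positive integer. Again $\om$ is a growth function of type $n$, and the Remark following Definition~\ref{df2} gives $C^{\om}_*(D)=C^{n}_*(D)$, the classical Zygmund space. The product $t\om(t)=t^{n+1}$ has type $n+1$, so a $C^{\om,1}$ domain is one whose parametrization $A$ admits a polynomial approximation of degree $n+1$ with error $O(|x'|^{n+1})$ on each window. By the same absorption of the top-degree term and a Campanato-type characterization of classical smoothness, this is equivalent to $A\in C^{n,1}$, i.e.\ the partial derivatives of $A$ of order $n$ are Lipschitz, which is precisely what a $C^{n,1}$ boundary means. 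Applying Theorem~\ref{thm4} yields the boundedness of $T_D$ on $C^{n}_*(D)$.

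No serious obstacle is expected: the entire argument is a dictionary translation between the general formulation of Theorem~\ref{thm4} and the classical H\"older and Zygmund setups, relying only on the standard equivalence between uniform polynomial boundary approximation and classical H\"older or Lipschitz smoothness of the parametrization, and on the identification of $C^{\om}_*$ with $C^s$ or $C^n_*$ in the two power cases.
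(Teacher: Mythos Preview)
Your approach is correct and matches the paper, which states the corollary as an immediate specialization of Theorem~\ref{thm4} without giving a separate proof. The identification of $\om(t)=t^s$ (resp.\ $\om(t)=t^n$) as a type $n$ growth function, of $C^{\om}_*(D)$ with $C^s(D)$ (resp.\ $C^n_*(D)$), and of a $C^{\om,1}$ domain with a $C^{s+1}$ (resp.\ $C^{n,1}$) domain is exactly what is intended.

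One step, however, is written incorrectly. In part~(1) you say that the degree-$(n+1)$ terms of $P$ can be absorbed into the error because they are $O(|x'|^{n+1})$. But the error here is $O(|x'|^{s+1})$ with $s+1>n+1$, so for small $|x'|$ one has $|x'|^{n+1}\gg |x'|^{s+1}$; absorbing the top-degree part of $P$ would \emph{weaken} the bound to $O(|x'|^{n+1})$, and Campanato would then only yield $A\in C^{n,1}$, not $C^{s+1}$. The fix is simply to drop that sentence: keep $\deg P\le n+1$ and invoke Campanato directly (or, equivalently, cite the Remark after Definition~\ref{df3}, which already tells you that for $\om'(t)=t^{s+1}=o(t^{n+1})$ the function $A$ is $C^{n+1}$ and $P$ is its Taylor polynomial). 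Your absorption argument \emph{is} correct in part~(2), since there the error is exactly $O(|x'|^{n+1})$ and the degree-$(n+1)$ monomials are of the same order.
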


\subsection {Organization and notation}

 In Section 2 we prove  auxiliary facts about differentiation of PV integrals concerning the extra cancellation property.
Theorems \ref{thm5} and \ref{thm4} are obtained in Section 3 and 4, respectively.

As usual, the letter $C$ will denote a constant, which may be different at each
occurrence and which is independent of the relevant variables under consideration.
Notation $A\lesssim B$ means that there is a fixed positive constant $C$ such that $A<CB.$ If  $A\lesssim B\lesssim A,$ then we write $A\approx B.$ We write $A\lesssim_{a,b} B,$ if a corresponding constant depends on $a,b.$

\section{Cancellation property and differentiation of PV integrals}
\subsection{Cancellation property of even kernels.}
The following proposition
 plays a crucial role in  the study of a  smooth convolution Calder\'{o}n-Zygmund operator with an even kernel.
\begin{lem}[{\cite{I,MOV}}]
\label{lem14}
  Let   $B$ be an arbitrary Euclidean ball in $\mathbb{R}^d,$  let $T$ be  a smooth  homogeneous Calder\'{o}n-Zygmund operator with an even kernel. Then $(T\chi_B)\chi_B\equiv 0$.
\end{lem}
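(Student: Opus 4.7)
My plan is to reduce $T\chi_B(y)$, for a fixed interior point $y\in B = B(z,R)$, to a surface integral on $S^{d-1}$ via polar coordinates centered at $y$, and then exploit a symmetry that is specific to balls (and only kicks in for even kernels). First I would substitute $u = x - y$ and use $K(-u) = K(u)$ to rewrite $T\chi_B(y) = \mathrm{PV}\int_{B-y} K(u)\,du$. Passing to polar coordinates $u = \rho\theta$ and using $K(\rho\theta) = \Omega(\theta)/\rho^d$, the radial integral over $[\varepsilon,\, r(\theta)]$, where $r(\theta)$ is the distance from $y$ to $\partial B$ in direction $\theta$, produces $\log r(\theta) - \log\varepsilon$. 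The divergent part disappears in the limit because $\int_{S^{d-1}}\Omega\,d\sigma = 0$, leaving the finite expression
\[
T\chi_B(y) \;=\; \int_{S^{d-1}} \Omega(\theta)\,\log r(\theta)\,d\sigma(\theta).
\]

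Next I would symmetrize via $\theta \mapsto -\theta$, using $\Omega(-\theta) = \Omega(\theta)$, to obtain
\[
2\,T\chi_B(y) \;=\; \int_{S^{d-1}} \Omega(\theta)\,\log\bigl(r(\theta)\,r(-\theta)\bigr)\,d\sigma(\theta).
\]
The key geometric input is then the \emph{power of a point} identity for the ball: the two halves of any chord through $y$ satisfy $r(\theta)\,r(-\theta) = R^2 - |y-z|^2$, independent of $\theta$. Hence the logarithm factors out as a constant and what remains is a scalar multiple of $\int_{S^{d-1}}\Omega\,d\sigma$, which vanishes by the cancellation hypothesis on $\Omega$.

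The only real subtlety, to be dispatched at the start, is justifying the polar decomposition of the principal value, i.e. that the radial truncation $|u|>\varepsilon$ matches the $\varepsilon$-excision built into the PV and that the $\log\varepsilon$ term can genuinely be pulled outside the surface integral and canceled against the mean-zero property of $\Omega$. This is routine because $y$ is strictly interior, so $r(\theta)$ is bounded above and away from zero uniformly in $\theta$, and $\Omega$ is continuous on $S^{d-1}$; beyond that, the argument is pure symmetry, reproducing the proof in \cite{I, MOV}.
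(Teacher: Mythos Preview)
Your argument is correct and is precisely the classical proof from \cite{I,MOV}; the paper does not supply its own proof of this lemma but simply cites those references. One remark: the paper immediately records the stronger identity (\ref{eq:eq130}), namely that the \emph{truncated} integral $\int_{B\setminus\{|x-y|<\varepsilon\}}K(x-y)\,dx$ already vanishes for each $\varepsilon<\mathrm{dist}(y,\partial B)$, and this is what is actually used later (e.g.\ in Lemmas~\ref{lem13} and~\ref{lem15}); your computation in fact proves this too, since the $\log\varepsilon$ term cancels exactly against $\int_{S^{d-1}}\Omega=0$ before passing to any limit, so it would be worth stating that explicitly.
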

In fact, the authors proved in \cite{MOV} that
\begin{equation}\label{eq:eq130}
 \int_{B\setminus \{x: |x-y|<\varepsilon\}}  K(x-y)dx= 0,
\end{equation}
where $y \in B$ and $\varepsilon< dist(y, \partial B)$.

In what follows, we  use a reformulation of  the cancellation  property of the even  functions with zero integral on the unit sphere.
\begin{lem}
\label{lem16}
  Let    $K$ be  an even  homogeneous function of degree $-d$ in  $\mathbb{R}^d \setminus \{0\}$ with zero integral on the unit sphere. Then
 \[\int_{\mathbb{R}^{d-1}}  K(x',1)dx'=0,\]
provided   $dx'$ is Lebesgue measure on $\mathbb{R}^{d-1}$.
\end{lem}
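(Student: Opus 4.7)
The plan is to convert the integral over the hyperplane $\{x_d=1\}$ into an integral over the upper hemisphere $S^{d-1}_+ = \{\omega \in S^{d-1} : \omega_d > 0\}$, and then invoke evenness together with the vanishing mean on $S^{d-1}$.

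First, I would parameterize $S^{d-1}_+$ by radial projection from the hyperplane $\{x_d = 1\}$: for $x' \in \mathbb{R}^{d-1}$ set
\[
\omega(x') = \frac{(x',1)}{\sqrt{1+|x'|^2}} \in S^{d-1}_+.
\]
Since $(x',1) = \omega/\omega_d$ with $\omega_d = (1+|x'|^2)^{-1/2}$, the homogeneity of degree $-d$ gives
\[
K(x',1) = K(\omega/\omega_d) = \omega_d^{\,d}\,K(\omega) = (1+|x'|^2)^{-d/2}\,K(\omega).
\]

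Second, I would compute the surface area element of this parametrization. Writing $\omega$ as a graph $(\eta, \sqrt{1-|\eta|^2})$ over the ball, one has $\eta = x'/\sqrt{1+|x'|^2}$ and $d\sigma = d\eta/\sqrt{1-|\eta|^2}$. A short Jacobian calculation (using the matrix determinant lemma on $I - x'(x')^T/(1+|x'|^2)$) yields
\[
d\sigma(\omega) = (1+|x'|^2)^{-d/2}\,dx'.
\]
Combining this with the previous display, the two factors of $(1+|x'|^2)^{d/2}$ cancel exactly, which is the point of choosing degree $-d$:
\[
\int_{S^{d-1}_+} K(\omega)\,d\sigma(\omega) = \int_{\mathbb{R}^{d-1}} K(x',1)\,dx'.
\]

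Finally, the evenness hypothesis $K(-\omega) = K(\omega)$ means $\int_{S^{d-1}_+}K\,d\sigma = \int_{S^{d-1}_-}K\,d\sigma$, so each equals half of $\int_{S^{d-1}} K\,d\sigma$, which vanishes by assumption. Hence $\int_{\mathbb{R}^{d-1}} K(x',1)\,dx' = 0$.

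The only delicate point is really the bookkeeping in the Jacobian: one has to see that the factor $(1+|x'|^2)^{-d/2}$ from the area form matches the factor produced by homogeneity, which is why the statement is specifically about homogeneous degree $-d$ kernels. Convergence of the planar integral is not an issue since $|K(x',1)| \lesssim (1+|x'|^2)^{-d/2}$ is integrable on $\mathbb{R}^{d-1}$.
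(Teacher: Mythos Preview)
Your proof is correct and follows essentially the same strategy as the paper: both identify $\int_{\mathbb{R}^{d-1}}K(x',1)\,dx'$ with the integral of $K$ over the upper hemisphere $S^{d-1}_+$ via radial projection, and then conclude by evenness and the zero-mean hypothesis. The only difference is bookkeeping for the Jacobian: the paper passes to polar coordinates $x'=ru$ on $\mathbb{R}^{d-1}$ and substitutes $r=\tan\theta$ to land in the standard spherical parametrization $(u\sin\theta,\cos\theta)$, whereas you compute the Cartesian Jacobian of the gnomonic projection directly via the matrix determinant lemma.
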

\begin{proof}
Put $x'=ur$, $u\in \mathbb{S}^{d-2}$, $r=|x'|$ and integrate in spherical coordinates in $\mathbb{R}^{d-1}$.  We have
 \[\int_{\mathbb{R}^{d-1}}  K(x',1)dx'=\int_{S^{d-2}}dS_{d-2}(u)  \int_{0}^{\infty} K(u r,1)r^{d-2}dr ,\]
where $dS_{d-2}(u)$ is the induced surface measure on the unit sphere $S^{d-2}$.
Using the new variable  $r=\tan \theta$, $\theta\in (0, \pi/2)$, in the inner integral,  we obtain  by homogeneity
\[=\int_{S^{d-2}}  dS_{d-2}(u)\int_{0}^{\pi/2} K(u \tan \theta,1)\tan^{d-2}\theta\cos^{-2}\theta d\theta\]
\[=\int_{S^{d-2}}  dS_{d-2}(u) \int_{0}^{\pi/2} K(u \sin \theta,\cos\theta)\sin^{d-2}\theta d\theta\]
\[=\int_{U}  K(x) dS_{d-1}(x),\]
where the set $U$ is the  half of the unit sphere $S^{d-1}$ above the hyperplane $x_d=0$ and $dS_{d-1}(x)$ is the induced surface measure on the sphere $S^{d-1}$. Since $K$ is even, the last integral is zero and the proof of the lemma is completed.
\end{proof}

From the Lemma \ref{lem16} one easily has
\begin{cor}
 Let    $K$ be  an even  homogeneous function of degree $-d$ in  $\mathbb{R}^d \setminus \{0\}$ with zero integral on the unit sphere. Let $ \mathbb{H}^{d-1}$
 be an affine hyperplane that does not pass through the origin, and denote  by   $dx$ Lebesgue measure in $ \mathbb{H}^{d-1}$. Then
 \[\int_{\mathbb{H}^{d-1}}  K(x)dx=0.\]
 \end{cor}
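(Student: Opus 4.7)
The plan is to reduce the general affine-hyperplane case to the special case of Lemma~\ref{lem16} by two standard transformations: a radial dilation and an orthogonal rotation. Both transformations preserve the hypotheses on $K$ (evenness, homogeneity of degree $-d$, vanishing mean on $S^{d-1}$), so nothing essential changes after their application.

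First I would dilate. Since $\mathbb{H}^{d-1}$ does not meet the origin, let $h>0$ be its distance from the origin, and substitute $x=hy$. By the homogeneity of $K$,
\[
\int_{\mathbb{H}^{d-1}} K(x)\,dx \;=\; h^{d-1}\int_{\mathbb{H}^{d-1}/h} K(hy)\,dy \;=\; h^{-1}\int_{\mathbb{H}'} K(y)\,dy,
\]
where $\mathbb{H}':=\mathbb{H}^{d-1}/h$ is an affine hyperplane at distance $1$ from the origin. Thus it suffices to treat the unit-distance case.

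Next I would rotate. Let $v$ denote the unit normal to $\mathbb{H}'$ and let $R\in O(d)$ be an orthogonal transformation sending $v$ to $e_d$; then $R(\mathbb{H}')=\{x:x_d=1\}$. Changing variables $y=R^{-1}x$, which preserves the $(d-1)$-dimensional Lebesgue measure on hyperplanes, I obtain
\[
\int_{\mathbb{H}'} K(y)\,dy \;=\; \int_{\{x_d=1\}} K(R^{-1}x)\,dx \;=\; \int_{\mathbb{R}^{d-1}} K'(x',1)\,dx',
\]
where $K'(x):=K(R^{-1}x)$. The function $K'$ is again even, homogeneous of degree $-d$, and has zero mean on the unit sphere (the first two properties are obvious, and the third follows from the orthogonal invariance of the surface measure on $S^{d-1}$).

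Finally, I would apply Lemma~\ref{lem16} to $K'$, which immediately yields $\int_{\mathbb{R}^{d-1}} K'(x',1)\,dx'=0$, and hence the desired vanishing for the original hyperplane. There is no real obstacle here: the whole content is a routine reduction, and the only point to be careful about is that the dilation and rotation do not disturb the evenness/homogeneity/zero-mean properties of $K$ that are used when invoking the lemma.
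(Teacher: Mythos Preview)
Your proposal is correct and follows precisely the reduction the paper has in mind: the paper simply states that the corollary follows easily from Lemma~\ref{lem16}, and your dilation-then-rotation argument, together with the observation that evenness, $(-d)$-homogeneity, and vanishing spherical mean are preserved under both operations, is exactly the natural way to carry this out.
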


\subsection{Differentiation of $PV$  integrals with even kernels.}
We  prove the  differentiation formula for $PV$ integrals for even Calder\'{o}n-Zygmund kernels.
  \begin{lem}
\label{lem13}
  Let $ K(x)$
 be an even $C^1$ smooth Calder\'{o}n-Zygmund kernel in $\mathbb{R}^d$. Let $y$ be in a $C^{1}$ domain $D$,
 $D_\varepsilon=D\setminus \{x: |x-y|<\varepsilon\}$,   $\varepsilon< \textrm{dist}(y,\partial D)$. Then,
\begin{equation}\label{eq:eq137}
\frac{\partial}{\partial y_i} \int_{D_\varepsilon} K(x-y)dx=\int_{D_\varepsilon} \frac{\partial}{\partial y_i}K(x-y)dx,
\end{equation}
and tending $\varepsilon$ to $0$,
\begin{equation}\label{eq:eq13}
\frac{\partial}{\partial y_i} PV\int_D K(x-y)dx=PV\int_D \frac{\partial}{\partial y_i}K(x-y)dx.
\end{equation}

\end{lem}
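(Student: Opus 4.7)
The plan is to treat the two identities in succession, in both cases reducing everything to the vanishing of $\int_{S^{d-1}}\Omega(\sigma)\sigma_i\,dS(\sigma)$, which holds because $\Omega$ is even and $\sigma\mapsto\sigma_i$ is odd.

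For (\ref{eq:eq137}), the only delicate feature is that the inner boundary $\partial B(y,\varepsilon)$ of $D_\varepsilon$ moves rigidly with $y$, while the integrand $K(x-y)$ is smooth and bounded on a neighbourhood of $D_\varepsilon$ thanks to the standoff $\varepsilon<\mathrm{dist}(y,\partial D)$. I would apply the Reynolds transport formula (or equivalently change variables $z=x-y$, differentiate under the integral in $y$ on $\{|z|>\varepsilon\}$ using that $\chi_D(y+z)$ is of bounded variation while $K(z)$ is smooth there, then integrate by parts). The interior contribution is precisely $\int_{D_\varepsilon}\partial_{y_i}K(x-y)\,dx$. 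The flux through the moving inner sphere, using that the velocity of each boundary point is $e_i$ and the outward normal to $D_\varepsilon$ on $\partial B(y,\varepsilon)$ is $-\sigma(x)=-(x-y)/\varepsilon$, equals
\[
-\int_{\partial B(y,\varepsilon)}K(x-y)\sigma_i(x)\,dS(x)=-\frac{1}{\varepsilon}\int_{S^{d-1}}\Omega(\sigma)\sigma_i\,dS(\sigma),
\]
where the last step uses homogeneity $K(\varepsilon\sigma)=\varepsilon^{-d}\Omega(\sigma)$. This vanishes by evenness, which establishes (\ref{eq:eq137}).

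For (\ref{eq:eq13}) I would first observe that the same cancellation forces the right-hand side of (\ref{eq:eq137}) to be independent of $\varepsilon$ for all sufficiently small $\varepsilon$. Indeed, applying the divergence theorem on the annulus $B(y,\eta)\setminus B(y,\varepsilon)$ with $0<\varepsilon<\eta<\mathrm{dist}(y,\partial D)$ and using $\partial_{y_i}K(x-y)=-\partial_{x_i}K(x-y)$ gives
\[
\int_{B(y,\eta)\setminus B(y,\varepsilon)}\partial_{y_i}K(x-y)\,dx=\Bigl(\frac{1}{\varepsilon}-\frac{1}{\eta}\Bigr)\int_{S^{d-1}}\Omega(\sigma)\sigma_i\,dS(\sigma)=0.
\]
Hence $\int_{D_\varepsilon}\partial_{y_i}K(x-y)\,dx$ stabilizes to a function $G(y)$ for $\varepsilon$ small, and this $G(y)$ is by definition the principal value on the right-hand side of (\ref{eq:eq13}). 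Combining this with the standard locally uniform convergence of $\int_{D_\varepsilon}K(x-y)\,dx$ to $PV\int_DK(x-y)\,dx$ on compact subsets of $D$ (which uses the vanishing spherical mean of $\Omega$), the elementary result on interchanging limit and derivative for a family of $C^1$ functions whose gradients stabilize yields (\ref{eq:eq13}).

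The main obstacle is making the Leibniz step in (\ref{eq:eq137}) rigorous despite the non-smoothness of $\chi_D$ and the motion of the excised ball; this is a standard issue, handled either by mollifying $\chi_D$ or, more cleanly, by invoking the Reynolds transport formula for a $C^1$ domain with a rigidly moving sphere. Once the sphere boundary term is isolated, the evenness of $\Omega$ does all the real work, and the remainder of both identities is just a matter of tracking cancellations on concentric spheres centred at $y$.
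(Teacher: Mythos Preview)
Your argument is correct and shares the paper's overall architecture: isolate the boundary term on $S_\varepsilon$ via a Leibniz-type formula, show it vanishes, observe that the right-hand side of (\ref{eq:eq137}) is then independent of $\varepsilon$, and finish with the differentiable limit theorem. The one genuine difference is how the sphere term is killed. You compute it explicitly as $\varepsilon^{-1}\int_{S^{d-1}}\Omega(\sigma)\sigma_i\,dS(\sigma)$ and use that $\Omega$ is even while $\sigma_i$ is odd. The paper instead applies Green's formula to rewrite $\partial_{y_i}\int_{D_\varepsilon}K(x-y)\,dx$ as $-\int_{\partial D}K(x-y)\cos(\nu,x_i)\,dS(x)$, then specializes $D$ to a ball and invokes the ball cancellation property (\ref{eq:eq130}) from Lemma~\ref{lem14} to conclude that this boundary integral vanishes on any sphere containing $y$, hence on $S_\varepsilon$. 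Similarly, the paper reads the $\varepsilon$-independence directly off this $\partial D$-representation, whereas you redo the divergence theorem on an annulus. Your parity computation is more elementary and self-contained; the paper's route through Lemma~\ref{lem14} is slightly heavier here but keeps the argument tied to the extra-cancellation theme used throughout.
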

\begin{proof}

We start with the  formula  of differentiation under the integral sign  (see \cite[Section 8(3)]{Mikh}). Notice that the restriction of having an even function is not there anymore.
\begin{equation}
\label{eq:eq14}
\frac{\partial}{\partial y_i} \int_{D_\varepsilon} K(x-y)dx=  \int_{D_\varepsilon} \frac{\partial}{\partial y_i} K(x-y)dx +\int_{S_\varepsilon}K(x-y) \cos(\nu,x_i)dS(x),
\end{equation}
where   $S_\varepsilon=\{x: |x-y|=\varepsilon\}$,   $\nu$ is  the outer normal to $S_\varepsilon$, and $dS(x)$ is the induced  surface measure on $S_\varepsilon$.

 Green's formula, applied to the first integral on the right hand side  of (\ref{eq:eq14}), provides
 \[\int_{D_\varepsilon} \frac{\partial}{\partial y_i} K(x-y)dx= -\int_{D_\varepsilon} \frac{\partial}{\partial x_i} K(x-y)dx\]
 \[= -\int_{\partial D \cup S_\varepsilon}K(x-y) \cos(\nu,x_i)dS(x).\]
Therefore,
\begin{equation}
\label{eq:eq141}
\frac{\partial}{\partial y_i} \int_{D_\varepsilon} K(x-y)dx=-\int_{\partial D}K(x-y) \cos(\nu,x_i)dS(x).
\end{equation}
 Replace the  domain $D$ in (\ref{eq:eq141}) by an arbitrary ball $B$. By  (\ref{eq:eq130}), for $y\in B$  we have
\[0= \frac{\partial}{\partial y_i} \int_{B_\varepsilon} K(x-y)dx=-\int_{\partial B}K(x-y) \cos(\nu,x_i)dS(x),\]
where  $B_\varepsilon=B\setminus \{x: |x-y|<\varepsilon\}$ and  $\partial B$ is a boundary of $B$.
Therefore, the second integral on the right hand side of (\ref{eq:eq14}) is equal to zero for any $\varepsilon$, and  (\ref{eq:eq137}) holds.

On the other hand, the surface integral on right hand side of (\ref{eq:eq141}) is independent of $\varepsilon$, so, by Differentiable Limit Theorem,
  we obtain (\ref{eq:eq13}).
 Lemma \ref{lem13} is proved.
\end{proof}

 Iterating the formula obtained and using (\ref{eq:eq130})  we have
\begin{cor} \label{cor17}
Let $ K(x)$
 be an even $C^n$ smooth Calder\'{o}n-Zygmund kernel in
$\mathbb{R}^d$, $D_\varepsilon=D\setminus \{x: |x-y|<\varepsilon\}$,   $\varepsilon< \textrm{dist}(y,\partial D)$ and let  $\partial^k_y$ be the derivative of order $k$, $k=1,\dots,n$. Then
\begin{equation}\label{eq:eq131}
\partial^k_y \int_{D_\varepsilon} K(x-y)dx=\int_{D_\varepsilon} \partial^k_y K(x-y)dx
\end{equation}
and respectively
\begin{equation}\label{eq:eq313}
\partial^k_y PV \int_{D} K(x-y)dx=PV \int_{D} \partial^k_y K(x-y)dx
\end{equation}
for each $y$ in a $C^{1}$ domain $D$.
\end{cor}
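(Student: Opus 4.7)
The plan is to prove (\ref{eq:eq131}) by induction on $k$ and then deduce the principal value identity (\ref{eq:eq313}) by passing $\varepsilon\to 0$. The base case $k=1$ is exactly Lemma \ref{lem13}. For the inductive step I would take $\partial_{y_i}$ of (\ref{eq:eq131}) at level $k$, write $\partial^{k+1}_y \int_{D_\varepsilon} K\,dx = \partial_{y_i}\int_{D_\varepsilon}\partial^k_y K(x-y)\,dx$, and apply the general Mikhlin differentiation formula (\ref{eq:eq14}) to the (no-longer-even) kernel $\partial^k_y K$. This produces the volume integral $\int_{D_\varepsilon}\partial^{k+1}_y K\,dx$ together with a surface term on $S_\varepsilon$ of the form $\int_{S_\varepsilon}\partial^k_y K(x-y)\cos(\nu,x_i)\,dS$, so obtaining (\ref{eq:eq131}) at order $k+1$ reduces to showing this surface term vanishes.

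To dispatch the surface term I would mimic the strategy from the proof of Lemma \ref{lem13} and replace the domain $D$ by an arbitrary Euclidean ball $B$ with $y$ in its interior. By the MOV identity (\ref{eq:eq130}), $\int_{B_\varepsilon} K(x-y)\,dx \equiv 0$; applying the inductive hypothesis to $B$ then gives $\int_{B_\varepsilon}\partial^k_y K\,dx = \partial^k_y(0) = 0$. Differentiating this identity once more through (\ref{eq:eq14}) with $B$ in place of $D$ yields $0 = \int_{B_\varepsilon}\partial^{k+1}_y K\,dx + \int_{S_\varepsilon}\partial^k_y K\cos(\nu,x_i)\,dS$, so the surface integral equals $-\int_{B_\varepsilon}\partial^{k+1}_y K(x-y)\,dx$. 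The surface integrand depends only on $y$, $\varepsilon$ and $i$, hence the right-hand side is independent of the ball; choosing $B = B(y,R)$ for varying $R$ and passing to polar coordinates centered at $y$, this $R$-independence forces the spherical mean $\int_{S^{d-1}}(\partial^{k+1}K)(\sigma)\,dS(\sigma)$ to vanish, which in turn makes the surface term itself identically zero and closes the induction.

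The principal-value identity (\ref{eq:eq313}) then follows by sending $\varepsilon\to 0$ in (\ref{eq:eq131}) and invoking the Differentiable Limit Theorem exactly as in the last step of Lemma \ref{lem13}; the fact that every intermediate surface integral vanishes identically in $\varepsilon$ guarantees uniform convergence of the differentiated truncations. The main conceptual obstacle is that once a single derivative has been brought inside, $\partial_y K$ is odd and of homogeneity $-d-1$, so Lemma \ref{lem13} cannot be invoked verbatim to continue the induction; the needed extra cancellation must instead be extracted inductively from the MOV identity (\ref{eq:eq130}) itself, which is precisely the content of the author's phrase ``iterating the formula obtained and using (\ref{eq:eq130})''.
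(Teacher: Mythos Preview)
Your proposal is correct and is precisely a careful unpacking of the paper's one-line justification ``iterating the formula obtained and using (\ref{eq:eq130})''. The induction via Mikhlin's formula (\ref{eq:eq14}) together with the MOV identity (\ref{eq:eq130}) applied to balls is exactly the intended mechanism, and your observation that the surface term at step $k$ equals $-\int_{B_\varepsilon}\partial^{k+1}_y K$ for every admissible ball, hence must vanish by varying $R$ in $B=B(y,R)$, is a clean way to close the induction; note that the naive parity argument alone would only kill the surface term for even $k$, so the appeal to (\ref{eq:eq130}) is genuinely needed.
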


By Green's formula, one has

\begin{lem}
\label{lem17}
  Let $K$ be a $C^n$-smooth Calder\'{o}n-Zygmund kernel in
$\mathbb{R}^d$. Let $P_k(x)$ be a homogeneous polynomial of order $k$ and let $\partial^k_x$ be a partial derivative of order $k$. Then the functions
    $N_1(x)=\partial^k_x (P_kK)(x)$ and  $N_2(x)=P_k(x)\partial^k_x K(x)$, $x\in \mathbb{R}^d \setminus 0$,  are   $C^{n-k}$-smooth  Calder\'{o}n-Zygmund kernels.
   \end{lem}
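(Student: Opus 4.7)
The lemma demands three properties of each $N_i$: homogeneity of degree $-d$, $C^{n-k}$ regularity on $\mathbb{R}^d\setminus\{0\}$, and vanishing mean on the unit sphere. The first two follow by elementary degree counting: $P_kK$ is homogeneous of degree $k-d$, and each of the $k$ derivatives lowers the degree by one, so $N_1$ has degree $-d$; similarly $\partial_x^kK$ has degree $-d-k$, whose product with $P_k$ of degree $k$ is $-d$. The $C^{n-k}$ smoothness away from the origin is immediate since $K\in C^n(\mathbb{R}^d\setminus\{0\})$ and $P_k\in C^\infty$.

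For the vanishing sphere mean, I use the standard identity that any $f$ homogeneous of degree $-d$ satisfies
\[
\int_{\{1<|x|<R\}} f(x)\,dx \;=\; (\log R)\int_{S^{d-1}} f\,dS,
\]
so it suffices to show that the shell integral $I_R(f):=\int_{\Omega_R} f\,dx$ vanishes on $\Omega_R=\{1<|x|<R\}$. For $N_1$, writing $\partial_x^k=\partial_{x_j}\partial_x^{k-1}$ and applying the divergence theorem on $\Omega_R$ produces a boundary integrand $\partial_x^{k-1}(P_kK)\,\nu_j$, which is homogeneous of degree $(k-d)-(k-1)=1-d$. The scaling $R^{1-d}\cdot R^{d-1}=1$ shows that any function of this homogeneity, integrated on the sphere $|x|=R$, yields an $R$-independent quantity; thus the contributions from $|x|=1$ and $|x|=R$ coincide and cancel in the divergence-theorem formula.

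For $N_2$, I iterate integration by parts $k$ times to obtain
\[
\int_{\Omega_R} P_k\,\partial_x^k K\,dx \;=\; (-1)^k(\partial_x^k P_k)\int_{\Omega_R} K\,dx \,+\, (\text{boundary terms}).
\]
The interior term vanishes because $\partial_x^k P_k$ is a constant and $K$ already has zero sphere mean (whence $\int_{\Omega_R} K=0$). Each boundary term arising in the iterated IBP has an integrand of the form $(\partial^\beta P_k)(\partial^\gamma K)\,\nu_j$ with $|\beta|+|\gamma|=k-1$, hence again of homogeneity $(k-|\beta|)+(-d-|\gamma|)=1-d$, so the same scaling argument cancels each such term across the two spheres of $\partial\Omega_R$.

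The main obstacle is the bookkeeping in the $N_2$ case: one must arrange the iterated IBP so that it is transparent that every boundary integrand has the homogeneity $1-d$. That this is precisely the degree making the sphere integral $R$-independent is the heart of the matter; it reflects the fact that each IBP step trades one derivative on $K$ for one on $P_k$ while preserving the total scaling degree, and ultimately reduces the entire statement to the single hypothesis that $K$ has zero mean on $S^{d-1}$.
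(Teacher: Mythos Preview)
Your proof is correct and follows essentially the same route as the paper's: both verify the zero-shell-integral condition via Green's/divergence theorem, using that the boundary integrands are homogeneous of degree $1-d$ so the contributions from the inner and outer spheres cancel. The only cosmetic difference is that the paper handles $N_2$ by induction on $k$ (writing $P_k\,\partial^k K=\partial_j(P_k\,\partial^{k-1}K)-(\partial_jP_k)\,\partial^{k-1}K$ and invoking the case of degree $k-1$), whereas you unroll this induction into a single iterated integration by parts and observe directly that every boundary term has homogeneity $1-d$ and the final interior term is $(\partial_x^kP_k)\int_{\Omega_R}K=0$.
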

\begin{proof}
It is easy to see that  $N_1$ and $N_2$ are  homogeneous of order $-d$ functions, $C^{n-k}$-smooth on $\mathbb{R}^d\backslash0$.
That a degeneration can occur, for example, for the generalized Riesz potential, doesn't bother us. We need to check  that
\[\int_{r<|x|<R}N_i(x)dx=0, \:i=1,2,\]
for arbitrary $0<r<R<\infty$.

We argue by  induction on order of polynomial $P_k(x)$. So,  we start with
the function $N_1=\partial_j(x_k K(x))$. Let $\nu$ be  the outer normal to the spheres $S_r$ and  $S_R$, and  let $dS(x)$ be the induced  surface measure. By Green's formula, we have
\[\int_{r<|x|<R}N_1(x)dx=\int_{r<|x|<R}\partial_j (x_k K(x))dx\]
\[=\int_{S_R\cup S_r}x_k K(x)\cos(\nu, x_j) dS(x)\]
\[=\int_{S_R}x_k K(x)\cos(\nu, x_j) dS(x)-\int_{S_r}x_k K(x)\cos(\nu, x_j) dS(x).\]
By homogeneity, the integral $\int_{S_R}x_k K(x)\cos(\nu, x_j) dS(x)$ does not depend on $R$,  therefore
$\int_{r<|x|<R}N_1(x)dx=0$.

 For  $N_2=x_k \partial_j K(x)$,  we have
\[\int_{r<|x|<R}N_2(x)dx=\int_{r<|x|<R}x_k \partial_j K(x)dx\]
\[=\int_{r<|x|<R}N_1(x)dx-\delta_{k,j}\int_{r<|x|<R} K(x)dx=0,\]
since both integrals are equal to zero.

For $N_1$ defined by an arbitrary polynomial $P_k$, we repeat the argument from the first step. So, by  Green's formula for $-d$-homogenous functions, we obtain
\[\int_{r<|x|<R}N_1(x)dx=\int_{r<|x|<R}\partial^k_x (P_k(x)K(x))dx\]
\[=\int_{r<|x|<R}\partial_j \partial^{k-1}_x (P_k K)(x)dx\]
\[=\int_{S_R\cup S_r}\partial^{k-1}_x (P_k K)(x) \cos(\nu, x_j) dS(x)=0\]
by  homogeneity.

For $N_2$ with an arbitrary polynomial $P_k$, we have
  \[N_2(x)=P_k(x)\partial^k_x K(x)=P_k(x)\partial_j\partial^{k-1}_x K(x)\]
\[=\partial_j(P_k \partial^{k-1}_x K)(x)-\partial_jP_k(x)\partial^{k-1}_x K(x).\]
Then
\[\int_{r<|x|<R}N_2(x)dx\]
\[=\int_{r<|x|<R}\partial_j(P_k \partial^{k-1}_x K)(x)dx-\int_{r<|x|<R}\partial_jP_k(x)\partial^{k-1}_x K(x)dx.\]
Apply  Green's formula to the first summand, then

\[\int_{r<|x|<R}\partial_j(P_k \partial^{k-1}_x K)(x)dx=\int_{S_R\cup S_r}P_k \partial^{k-1}_x  K(x) \cos(\nu, x_j) dS(x)=0\]
by homogenuity.
Since    $\textrm{deg}(\partial_jP_k)=k-1$, the induction assumption is applicable for the second summand.
 The proof of Lemma \ref{lem17} is completed.
\end{proof}

 By   Differentiable Limit Theorem, Lemma \ref{lem17} and Lemma \ref{lem13} imply a corollary.
\begin{cor}
Let $ K(x)$
 be an even   homogeneous of degree $-d$ function,  $C^n$-smooth  on
$\mathbb{R}^d \setminus 0$ with zero integral on the unit sphere. Let $P$ be a homogeneous polynomial of degree $k\geq 0$. Then, for the derivative of order $l$, $k<l\leq n$, we have
\begin{equation}\label{eq:eq132}
\partial^l_y PV\int_D K(x-y)P(x-y)dx=PV\int_D \partial^l_y (K(x-y)P(x-y))dx.
\end{equation}
\end{cor}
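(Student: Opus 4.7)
The plan is to reduce the claim to Corollary~\ref{cor17} by differentiating $k$ times so as to absorb the polynomial factor $P$ into an even Calder\'{o}n-Zygmund kernel. Since $K$ is even and $P$ is homogeneous of degree $k$, the product $KP$ satisfies $(KP)(-x)=(-1)^k(KP)(x)$, and iterating the chain rule gives $(\partial^\alpha(KP))(-x)=(-1)^{k+|\alpha|}(\partial^\alpha(KP))(x)$. In particular, for any multi-index $\alpha$ with $|\alpha|=k$, the function $\partial^\alpha(KP)$ is even, and by Lemma~\ref{lem17} it is a $C^{n-k}$-smooth Calder\'{o}n-Zygmund kernel. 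This is the decisive observation: after $k$ differentiations the integrand becomes an \emph{even} CZ kernel, to which Corollary~\ref{cor17} applies for up to $n-k$ further derivatives.

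For $k=0$ the polynomial is a constant and the statement is Corollary~\ref{cor17} itself. For $k\geq 1$ the integrand $K(x-y)P(x-y)$ is locally integrable in $x$ (homogeneity $k-d>-d$), so $F(y)=\int_D K(x-y)P(x-y)\,dx$ is an ordinary convergent integral. I would differentiate $F$ step by step: for the first $k-1$ differentiations the differentiated integrand is still locally integrable (homogeneity $\geq 1-d$) and the standard dominated-convergence argument justifies differentiation under the integral sign. For the $k$-th differentiation the integrand becomes a CZ kernel, and I pass to the $\varepsilon$-truncation $D_\varepsilon=D\setminus\{|x-y|<\varepsilon\}$ and apply the general surface formula~(\ref{eq:eq14}). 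Exactly as in the proof of Lemma~\ref{lem13}, the resulting right-hand side is independent of $\varepsilon$, and the scaling computation used in the proofs of Lemmas~\ref{lem16} and~\ref{lem13} shows that the boundary integral over $S_\varepsilon$ reduces to a fixed constant $c_\beta$ independent of $y$ and $\varepsilon$. Letting $\varepsilon\to 0$ via the Differentiable Limit Theorem yields, for $|\beta|=k$,
\[
\partial^\beta_y F(y)=PV\int_D \partial^\beta_y[K(x-y)P(x-y)]\,dx+c_\beta.
\]

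For the final step, given $l$ with $k<l\leq n$, choose a decomposition $\partial^l_y=\partial^{\beta_2}_y\partial^{\beta_1}_y$ with $|\beta_1|=k$ and $|\beta_2|=l-k\leq n-k$. Applying $\partial^{\beta_2}_y$ to the displayed formula annihilates $c_{\beta_1}$, and since $(-1)^k\partial^{\beta_1}(KP)$ is an even $C^{n-k}$-smooth CZ kernel, Corollary~\ref{cor17} allows $\partial^{\beta_2}_y$ to pass inside the $PV$ integral, giving exactly $PV\int_D\partial^l_y[K(x-y)P(x-y)]\,dx$. The delicate point of the argument is the $k$-th step, where the surface contribution first ceases to vanish; what rescues us is precisely the hypothesis $l>k$, which forces at least one further differentiation and kills the extraneous constant $c_\beta$.
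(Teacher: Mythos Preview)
Your proof is correct and shares the paper's decisive reduction: $\partial^k(KP)$ is an even $C^{n-k}$-smooth Calder\'{o}n--Zygmund kernel (parity plus Lemma~\ref{lem17}), so Corollary~\ref{cor17} handles the remaining $l-k$ derivatives. The treatment of the first $k$ derivatives differs, however. The paper works at the $D_\varepsilon$ level throughout: it uses Green's formula to rewrite $\partial_{y_i}\int_{D_\varepsilon}KP$ as a $\partial D$ surface integral (the analog of~(\ref{eq:eq141}) with $K$ replaced by $KP$), differentiates that boundary integral $k$ times in $y$, and then recognizes the result as $\partial_{y_i}\int_{D_\varepsilon}\partial^k_y(KP)$ via~(\ref{eq:eq141}) applied to the even kernel $\partial^k(KP)$. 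Matching these two boundary representations gives the truncated identity $\partial^l_y\int_{D_\varepsilon}KP=\int_{D_\varepsilon}\partial^l_y(KP)$ exactly, with no residual constant. You instead exploit the local integrability of $KP$ for $k\geq 1$: you differentiate the honest convergent integral $k-1$ times by dominated convergence and handle the critical $k$-th step by the homogeneity scaling of the $S_\varepsilon$ surface term, at the price of a constant $c_\beta$ that the hypothesis $l>k$ eliminates. Your route is arguably more direct and makes transparent why $l>k$ is needed, while the paper's boundary-matching argument avoids the extraneous constant altogether and keeps the identity exact already before passing to the $PV$ limit.
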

\begin{proof}
Replacing $K$ by $KP$ in  (\ref{eq:eq14}) and using Green's formula, we have

\[\partial_{y_i} \int_{D_\varepsilon} K(x-y)P(x-y)dx\]
\[=  \int_{D_\varepsilon} \partial_{ y_i} (K(x-y)P(x-y))dx +\int_{S_\varepsilon}K(x-y)P(x-y) \cos(\nu,x_i)dS(x)\]
\[= -\int_{\partial D}K(x-y)P(x-y) \cos(\nu,x_i)dS(x),\]
where  $D_\varepsilon=D\setminus \{x: |x-y|<\varepsilon\}$, $S_\varepsilon=\{x: |x-y|=\varepsilon\}$,  $\varepsilon< \textrm{dist}(y,\partial D)$,  $\nu$ is  the outer normal to $S_\varepsilon$, and $dS(x)$ is the induced  surface measure on $S_\varepsilon$.

Therefore, for any derivative $\partial^k_y$ of order $k$,
\[\partial^k_y \partial_{y_i}\int_{D_\varepsilon} K(x-y)P(x-y)dx=-\int_{\partial D} \partial^k_y (K(x-y)P(x-y)) \cos(\nu,x_i)dS(x).\]
By Lemma \ref{lem17}, $\partial^k_x (KP)(x)$ is  a   smooth  homogeneous Calder\'{o}n-Zygmund kernel; it is even, since $K$ is even.
Replace $K(x-y)$ by $\partial^k_y (K(x-y)P(x-y))$  in (\ref{eq:eq141}). We have
\[\partial_{y_i} \int_{D_\varepsilon}\partial^k_y (K(x-y)P(x-y))dx=-\int_{\partial D}\partial^k_y (K(x-y)P(x-y)) \cos(\nu,x_i)dS(x).\]
Comparing the last two formulas, one has
\[\partial^k_y\partial_{y_i} \int_{D_\varepsilon}K(x-y)P(x-y)dx= \partial_{y_i}\int_{D_\varepsilon}\partial^k_y (K(x-y)P(x-y))dx.\]
Iterating, we have
\[\partial^l_y \int_{D_\varepsilon}K(x-y)P(x-y)dx= \partial^{l-k}_{y}\int_{D_\varepsilon}\partial^k_y (K(x-y)P(x-y))dx.\]
with $k<l\leq n$.

By Corollary  \ref{cor17} applied to the even kernel $\partial^k_x (KP)(x)$, we obtain
\[\partial^l_y \int_{D_\varepsilon} K(x-y)P(x-y)dx=\int_{D_\varepsilon} \partial^l_y (K(x-y)P(x-y))dx,\]
with $k<l \leq n$. Finally, by   Differentiable Limit Theorem, we obtain (\ref{eq:eq132}).
\end{proof}

We finish  the section by the following lemma.
\begin{lem}
\label{lem15}
  Let   $\Pi$ be the  boundary   of a halfspace $\Pi_+$, let $ K(x)$
 be an even   homogeneous of degree $-d$ function,  $C^n$-smooth  on
$\mathbb{R}^d \setminus 0$ with zero integral on the unit sphere. Then for any derivative of order $0< k \leq n$ and any $y\in \Pi_+$, one has
 \[PV \int_{\Pi_+} \partial^k K(x-y)dx=0.\]
\end{lem}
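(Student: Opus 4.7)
The strategy is to observe that
\[F(y):=PV\int_{\Pi_+}K(x-y)\,dx\]
is a \emph{constant} function of $y\in\Pi_+$, to push $\partial^k_y$ inside this PV integral, and then to read off the conclusion from $\partial^k_y K(x-y)=(-1)^k(\partial^k K)(x-y)$. After a rotation we may take $\Pi=\{x_d=0\}$, $\Pi_+=\{x_d>0\}$ and write $y=(y',y_d)$ with $y_d>0$.

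The constancy of $F$ follows from two changes of variable. First, the translation $u=x-(y',0)$ preserves $\Pi_+$ and yields $F(y)=F(0,y_d)$. Next, the rescaling $u=y_d v$ sends $\{u_d>0\}$ to $\{v_d>0\}$, and the homogeneity $K(y_d v)=y_d^{-d}K(v)$ exactly cancels the Jacobian $y_d^d$; hence $F(0,y_d)=F(0,1)$. (The convergence of the PV integral at infinity uses the evenness of $K$ together with its vanishing spherical mean: the integrals of $K$ over the spheres centered at $y$ intersected with $\Pi_+$ decay like $1/r^2$.) Therefore $F$ is constant on $\Pi_+$.

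To legitimize differentiating under the PV integral, exhaust $\Pi_+$ by bounded $C^1$ domains $D_R$ (smoothed-corner truncations $\Pi_+\cap B(y_0,R)$, $R\to\infty$). For $R$ large enough that $y\in D_R$, Corollary \ref{cor17} applied to $D_R$ gives
\[\partial^k_y\,PV\!\int_{D_R}K(x-y)\,dx\;=\;(-1)^k PV\!\int_{D_R}(\partial^k K)(x-y)\,dx.\]
Since $(\partial^k K)(x-y)=O(|x-y|^{-d-k})$ is absolutely integrable at infinity on $\Pi_+$ for $k\ge 1$, the right-hand side converges, uniformly for $y$ on compact subsets of $\Pi_+$, to $(-1)^k PV\!\int_{\Pi_+}(\partial^k K)(x-y)\,dx$ as $R\to\infty$. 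Uniform convergence of these $R$-truncated derivatives then permits passing to the limit on the left via the Differentiable Limit Theorem, giving $\partial^k_y F(y)=(-1)^k PV\!\int_{\Pi_+}(\partial^k K)(x-y)\,dx$. By the constancy of $F$, $\partial^k_y F\equiv 0$, which is the statement of the lemma.

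The main technical point is establishing the uniform convergence in the $R\to\infty$ step. On the right-hand side, the decay $|\partial^k K|\lesssim|x-y|^{-d-k}$ makes the convergence straightforward; on the left one has to bound, uniformly in $y$ on compact sets, the boundary contributions to $\partial^k_y\int_{D_R}K(x-y)\,dx$. Iterating formula (\ref{eq:eq141}), the planar piece of $\partial D_R$ contributes integrals of $(\partial^\beta K)(x-y)$ over $\Pi$ that vanish (by differentiating the identity $\int_\Pi K(x-y)\,dS(x)\equiv 0$ of the corollary to Lemma \ref{lem16} under the integral sign), while the spherical cap $\partial B(y_0,R)\cap\Pi_+$ contributes $O(R^{-1})$ by homogeneity, delivering the required uniform control.
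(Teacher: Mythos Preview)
Your argument is correct, but it takes a genuinely different route from the paper's. The paper does not establish that $F(y)=PV\int_{\Pi_+}K(x-y)\,dx$ is constant; in fact it never needs this PV integral to exist. Instead it exhausts $\Pi_+$ by the balls $B_r$ of radius $r$ centered at $(0',r)$, which are tangent to $\Pi$ at the origin, and uses Lemma~\ref{lem14} directly: since $T\chi_{B_r}\equiv 0$ on $B_r$, equation~(\ref{eq:eq130}) together with Corollary~\ref{cor17} gives $\int_{B_r\setminus\{|x-y|<\varepsilon\}}\partial^k K(x-y)\,dx=0$ \emph{exactly}, for every $r$ and every admissible $\varepsilon$. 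Then one only needs the limit $B_r\to\Pi_+$, and since $\partial^k K$ (with $k\ge 1$) is absolutely integrable away from the singularity, the Dominated Convergence Theorem finishes the job in one line.

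Your approach trades the ball cancellation for the symmetry/scaling observation that $F$ is constant. This is a nice idea, but it costs you an extra step: you must first show the PV integral of $K$ itself over $\Pi_+$ converges at infinity, which genuinely uses the evenness and zero spherical mean of $K$ (your $O(1/r^2)$ shell estimate). After that, the Corollary~\ref{cor17} plus Differentiable Limit Theorem machinery you invoke is sound, and indeed for $j\ge 1$ the uniform convergence of $\partial^j_y F_R$ follows immediately from the absolute integrability of $\partial^j K$ at infinity. Your final paragraph, analyzing boundary contributions via iterated use of~(\ref{eq:eq141}), is superfluous given the preceding argument and is also somewhat imprecise (the planar piece of $\partial D_R$ is only a bounded portion of $\Pi$, so the identity from the corollary to Lemma~\ref{lem16} applies in the limit, not exactly); I would drop it. In summary: both proofs are valid, but the paper's is shorter because the exact vanishing on balls eliminates any need to control the tail of the undifferentiated kernel.
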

\begin{proof}
After a shift and a rotation, we may assume that $\Pi$ is defined by the equation $x_d=0$, and $\Pi_+$ is the upper  halfspace defined by $x_d>0$, where $x=(x',x_d)\in \mathbb{R}^{d-1}\times\mathbb{R}$.
Consider  a sequence of imbedded balls $B_r$ with centres $(0',r)$ and radii $r>0$.
By (\ref{eq:eq130}) and (\ref{eq:eq131}), we have for $y\in B_r$
\[\int_{B_r \setminus \{x: |x-y|<\varepsilon\} } \partial^k K(x-y)dx=0.\]
Since  $\partial^k K(x-y)\in L^1(\mathbb{R}^d\setminus \{x: |x-y|<\varepsilon\},dx)$,  hence tending $r$ to $\infty$,
by   the Dominated Convergence  Theorem, it holds
\[\int_{B_r\setminus \{x: |x-y|<\varepsilon\}} \partial^k K(x-y)dx \longrightarrow \int_{\Pi_+\setminus \{x: |x-y|<\varepsilon\}} \partial^k K(x-y)dx\]
for arbitrary $y\in \Pi_+$ and $\varepsilon<dist(y, \Pi)$. That is  required, by Differentiable Limit Theorem.
\end{proof}

\section {Proof of Theorem \ref{thm5}}
 \subsection {$C^{\omega}$ parametrization.}  Fix a constant $r_0$ small enough so that  series of properties
that will be needed along the proof are satisfied.
   Choose $y\in D$ and put  $\delta=\textrm{dist}(y,\partial D)\leq r_0$. Assume that this distance is attained at a point $a \in\partial D$, which we can assume to be $a=0$.    We    assume   that $y=(0',\delta)$ and
 $(x',x_d)\in D$ for $0<t<r_0$.

 By Definition \ref{df3}, we have a function $A$, an $R$-window $Q$   with centre in origin  such that, after suitable shift and rotation
 \[D_Q=D\cap Q=\{ (x',x_d)\in (\mathbb{R}^{d-1}, \mathbb{R})\cap Q:x_d>A(x')\}.\]
 It is more convenient for us to take a  cylinder $Q=\{(x',x_d), |x'|<r_0, |x_d|<r_0\}$ as a $R$-window. Also, assume that  the tangent hyperplane to $\partial D$ in origin   is $x_d=0$,  $A(0')=0$ and $\nabla A(0')=0$.
For the relevant polynomial $P(x')\in \mathcal{P}_{n}(\mathbb{R}^{d-1})$  one has
   \[ |A(x')-P(x')|\lesssim_D \omega(|x'|), \:|x'|<r_0 .\]
 Observe that the polynomial $P$ inherits certain properties of the function $A$: $P(0')=0$ and $\nabla P(0')=0$.

\subsection {Start of proof.}
In order to prove Theorem \ref{thm3}   we will estimate for each derivative of order $n$ the following integral
$$I=\partial_y^{n} PV \int_D K(x-y)dx = PV\int_D  \partial_y^{n}K(x-y)dx, $$
where  equality is granted by Corollary \ref{cor17}.

Split   $I$ at the level $r_0$:
$$I= PV\int_{D_Q}\partial_x^{n}K(x-y)dx + \int_{D\setminus Q}\partial_x^{n}K(x-y)dx$$
$$ =II+III.$$
For  term  $III$, we clearly have
 $$ |III |\lesssim \frac{1}{r_0^{n}}. $$
To estimate term $II$,  consider the domain $W=\{x=(x',x_d): x_d>P(x')\}$ and denote $W_Q=W\cap Q$.  We represent $II$ as
$$II=\int_{D_Q \backslash W_Q}  \partial_x^{n} K(x-y)dx -\int_{W_Q\backslash D_Q} \partial_x^{n}  K(x-y)dx +$$
$$+ PV\int_{W_Q}  \partial_x^{n} K(x-y)dx=IV_1+IV_2+V =IV+V.$$
The domain $S=D_Q \backslash W_Q \cup W_Q\backslash D_Q$  and the boundary $\partial D$ are tangent in origin;  hence, $S$ is small. An absolute value estimate of IV follows \cite{V1} and is given in the next section.

 The estimates of  the principle value  term $V$ in a polynomial domain  $W_Q$ are postponed to Sections 3.4-3.6.

\subsection{Estimate of integral over a small sector (term IV)}

Observe that the domain $ S$ is contained in the domain $\{x=(x',x_d):|x'|<r_0, \:|x_d|\lesssim \omega(|x'|)\}$. Therefore, we get
\[ |IV|\leq\int_{S}  |\partial_x^{n} K(x-y)dx|  \]
\[ =\int_{|x'|<r_0}dx' \left |\int_{P(x')}^{A(x')}\frac{dx_d}{(|x'|^2+(x_d-\delta)^2)^{(d+n)/2}}\right| \]
\[ =\int_{|x'|<r_0}dx'\int_{|x_d|\leq |A(x')-P(x')|}\frac{dx_d}{(|x'|^2+(x_d+ P(x')-\delta)^2)^{(d+n)/2}} \]
\[ \leq \int_{|x'|<r_0}dx' \int_{|x_d|\lesssim \omega(|x'|)}\frac{dx_d}{(|x'|^2+(x_d+P(x')-\delta)^2)^{(d+n)/2}}.\]
Split the outer integral at the level $\delta$:
\[\int_{|x'|<r_0}=\int_{|x'|<\delta}+\int_{\delta<|x'|<r_0}.\]
For the first integral above, observe that   the estimates $|x_d|\lesssim \omega(|x'|)=o(|x'|)$  and $|P(x')|\lesssim |x'|^2$ imply that \[|x'|^2+(x_d+P(x')-\delta)^2\gtrsim \delta^2\]
for $\delta$ small enough.
 Hence, we have
 \[\int_{|x'|<\delta}dx'\int_{|x_d|\lesssim \omega(|x'|)}\frac{dx_d}{(|x'|^2+(x_d+P(x')-\delta)^2)^{(d+n)/2}}\]
 \[\lesssim \frac{1}{\delta^{d+n}}\int_{|x'|<\delta} dx'\int_{|x_d|\lesssim \omega(|x'|)}dx_d\]
 \[\lesssim \frac{1}{\delta^{d+n}}\int_{|x'|<\delta}\omega(|x'|)dx'\]
 \[\lesssim \frac{\omega(\delta)}{\delta^{n+1}}.\]
For the second integral,  we  use the estimate $|x'|^2+(x_d+P(x')-\delta)^2\geq |x'|^2$, and
  by almost decreasing property (\ref{eq:eq42}), we obtain
\[\int_{\delta<|x'|<r_0}dx'\int_{|x_d|\lesssim \omega(|x'|)}\frac{dx_d}{(|x'|^2+(x_d+P(x')-\delta)^2)^{(d+n)/2}}\]
\[\lesssim \int_{\delta< |x'|< r_0}\frac{dx'}{|x'|^{d+n}}\int_{|x_d|\lesssim \omega(|x'|)}dx_d\]
\[ \lesssim\int_{\delta< |x'|< r_0}\frac{\omega(|x'|)dx'}{|x'|^{d+n}}\]
\[\lesssim \frac{\omega(\delta)}{\delta^{n+1}}.\]
 So, integral $IV$ is estimated.

\subsection{Estimate of integral over a polynomial domain (term V)}

Consider the upper half-space
$\Pi_+ =\{x=(x',x_d) \in  \mathbb{R}^d:\: x_d>0$\}.  By Lemma \ref{lem15}, we have
\[\int_{W_Q}  \partial_x^{n} K(x-y)dx\]
\[= \int_{W_Q}  \partial_x^{n} K(x-y)dx-\int_{\Pi_+}  \partial_x^{n} K(x-y)dx\]

\[= \int_{W_Q}  \partial_x^{n} K(x-y)dx-\int_{\Pi_+\cap Q}  \partial_x^{n} K(x-y)dx+\int_{\Pi_+\setminus Q}  \partial_x^{n} K(x-y)dx.\]
 For the third integral above  we have
\[\int_{\Pi_+\setminus Q} | \partial_x^{n} K(x-y)|dx\]
\[\lesssim\int_{|x|>r_0}  |\partial_x^{n} K(x-y)|dx\lesssim\frac{1}{r_0^{n}} .\]
It remains to estimate the following quantity
\[J= \int_{W_Q}  \partial_x^{n} K(x-y)dx-\int_{\Pi_+\cap Q}  \partial_x^{n} K(x-y)dx\]
\[=-\int_{ |x'|< r_0}dx'\int_0^{P(x')}\partial_x^{n} K(x-y)dx_d.\]
The domain of integration is tangent to the boundary in origin, but it is not small enough. Indeed, the inequality $0<x_d< P(x')$  implies only that $|x_d|<|x'|^2$, and we can not follow the   proof of  term IV in previous Section 3.3, where the estimate $|x_d|<\omega(|x'|)$ is crucial. Our approach  is to  expand the integral $J$ into a sum of terms of two types.  Using polynomial restrictions on the integration domain we will evaluate the first ones  explicitly, while the latter are evaluated absolutely.

For this consider the  Taylor expansion of  the inner integral up to the order $n$ with respect to the variable $x_d$ around $0$. We have
 \[\int_0^{P(x')}\partial_x^{n} K(x',x_d-\delta)dx_d\]
\[=\sum_{k=1}^{n-1}\frac{1}{k!}\partial_d^{k-1}\partial_x^{n} K(x',-\delta)P(x')^k+\]
\[ + \frac{1}{(n)!}\partial_d^{n}\partial_x^{n} K(x',\eta-\delta)P(x')^{n}\]
\[=\sum _{k=1}^{n}I_k,\]
where $\eta \in (0, P(x'))$ is an appropriate point and $\partial_d$ is the derivative with respect to $x_d$.

We represent each polynomial $P(x')^k$  as  the  sum:
\begin{equation}
\label{eq:eq50}
  P(x')^k= Q_{n+k}(x')+T_{n+k}(x'), k=1,\dots,n
\end{equation}
where $Q_{n+k}$ is a polynomial of order less than  $n+k$ and $T_{n+k}$ has no terms of order less then $ n+k$.

Since the polynomial $P(x')$ has no terms of order less than 2, $P(x')^k$ and hence $Q_{n+k}$  have no terms of order less than $2k$. In particular,  $Q_{2n}$ has no terms of order less than $2n$, hence $Q_{2n}$ is zero.
 So, we get
\[I_k= \partial_d^{k-1}\partial_x^{n} K(x',-\delta)Q_{n+k}(x')+
\partial_d^{k-1}\partial_x^{n} K(x',-\delta)T_{n+k}(x')\]
 \[=I'_k+I''_k,\]
for $k=1,\dots , n-1$ and respectively
\[I_{n}=\partial_d^{n-1}\partial^{n} K(x',\eta -\delta)T_{2n}(x')=I''_{n}.\]

\subsection{Estimates of integrals with $I'_k$-terms }

 To estimate the integrals with $I'_k$-terms, we write $Q_{n+k}=\sum_{i=2k}^{n+k-1}q_i$,  $k=1,\dots ,n-1$, where $q_i$ is  a   homogeneous polynomials   of order $i$. In what follows, for simplicity, we  write  $\partial^{n+k-1}$ instead of $\partial_{d}^{k-1}\partial_x^{n}$. Then, one has
\[\int_{ |x'|< r_0} I'_k(x') dx'\]
\[=\int_{ |x'|< r_0}\partial^{n+k-1} K(x',-\delta)Q_{n+k}(x')dx'\]
\[=\sum_{i=2k}^{n+k}\int_{ |x'|< r_0}\partial^{n+k-1} K(x',-\delta)q_i(x')dx'.\]
Consider the summand
\[J_i=\int_{ |x'|< r_0}\partial^{n+k-1} K(x',-\delta)q_i(x')dx'\]
\[=\int_{ \mathbb{R}^{d-1}}\partial^{n+k-1} K(x',-\delta)q_i(x')dx'-\int_{ |x'|> r_0}\partial^{n+k-1} K(x',-\delta)q_i(x')dx'.\]
The second term  is   bounded by a constant $C(r_0)$. Indeed, we have
$$\int_{ |x'|> r_0}|\partial^{n+k-1} K(x',-\delta)q_{i}(x')|dx'$$
$$\lesssim\int_{ |x'|> r_0}|\partial^{n+k-1} K(x',-\delta)||x'|^{i}dx'$$
$$\lesssim\int_{ |x'|> r_0}|x'|^{-n-k+1-d+i}dx'$$
$$\lesssim\int_{ r_0}^\infty \frac{dr}{r^{n+k+1-i}}$$
$$\lesssim\frac{{r_0}^{i}}{{r_0}^{n+k}}<\infty.$$

    For the first term, we have
$$\int_{ \mathbb{R}^{d-1}}\partial^{n+k-1} K(x',-\delta)q_i(x')dx'=$$
$$\delta^{-n-k+1+i}\int_{ \mathbb{R}^{d-1}}\partial^{n+k-1} K(x',-\delta)q_i(x')\delta^{n+k-1-i}dx'.$$
By Lemma \ref{lem17}, the function $N(x',\delta)=\partial^{n+k-1} K(x',-\delta)q_i(x')\delta^{n+k-1-i}$ is an even   smooth homogeneous Calder\'{o}n-Zygmund kernel. By homogeneity and by Lemma \ref{lem16}, we have
 $$\int_{ \mathbb{R}^{d-1}}\partial^{n+k-1} K(x',-\delta)q_i(x')dx'=0$$
and  hence, $|J_i|\leq C(r_0)$ for every $i=2k,\dots , n+k-1$. This clearly implies
 \[\left|\int_{ |x'|< r_0} I'_k(x') dx'\right|<C(r_0)<\infty\]
for $k=1,\dots,n-1$. Recall that $I'_{n}=0$; thus, all terms $I'_k$ are uniformly bounded by a constant independent of $\delta$.

\subsection{Estimates of integrals with $I''_k$-terms }
The integrals with terms $I''_k$ will be   estimated absolutely. Recall that $T_{k}$ does not contain terms of order less than $\leq n+k$; hence, we get
\[|T_{k}(x')|\lesssim |x'|^{n+k}\]
for  $|x'|<r_0$ and $k=1,\dots, n$.  Also, observe that
\[|\partial^{n+k-1} K(x',-\delta)|\lesssim\frac{1}{(|x'|^2+\delta^2)^{(n+k-1+d)/2}}\]
for $k=1,\dots, n-1$  uniformly with respect to  $x\in D$.

  For $k=n$,  observe that $\eta \in (0,P(x'))$ implies $|\eta|=o(|x'|)$;
  hence $|x'|^2+(\eta-\delta)^2\gtrsim |x'|^2+\delta^2$. So, we obtain
\[|\partial^{2n-1} K(x',\eta -\delta)|\lesssim\frac{1}{(|x'|^2+\delta^2)^{(2n-1+d)/2}}\]
 uniformly with respect to $x\in \partial D$. Therefore, for $k=1,\dots, n$,
$$\int_{ |x'|< r_0}|I''_k(x')|dx'\lesssim\int_{ |x'|< r_0}\frac{|x'|^{n+k}dx'}{(|x'|^2+\delta^2)^{(n+k-1+d)/2}}.$$
Next, split the integral under consideration at the level $\delta$ as
   $$ \int_{|x'|<r_0} =\int_{|x'|<\delta}+\int_{\delta< |x'|< r_0}.$$
For the first integral on the right hand side,  we have $|x'|^2+\delta^2\geq \delta^2$; hence,
 $$\int_{|x'|<\delta} \frac{|x'|^{n+k}dx'}{(|x'|^2+\delta^2)^{(n+k-1+d)/2}}\leq \int_{|x'|<\delta}\frac{|x'|^{n+k}dx'}{\delta^{n+k-1+d}}\lesssim C$$
with $C$ independent of $\delta$.

For the second integral on the right hand side, we have  $|x'|^2+\delta^2\geq|x'|^2$; hence,
  $$\int_{\delta< |x'|< r_0}\frac{|x'|^{n+k}dx'}{(|x'|^2+\delta^2)^{(n+k-1+d)/2}}\lesssim \int_{\delta< |x'|< r_0}\frac{|x'|^{n+k}dx'}{|x'|^{d+n+k-1}}$$
  $$\lesssim \log\frac{1}{\delta}.$$
  Therefore,
  $$\int_{ |x'|< r_0}|I''_k(x')|dx'\lesssim 1+ \log\frac{1}{\delta}\lesssim \frac{\om(\delta)}{\delta^{n+1}}$$
 by (\ref{eq:eq42}).

With estimates of terms IV and V in hand,  the proof of Theorem \ref{thm5} is completed.

\section{Proof of Theorem \ref{thm4}}

\subsection {Gradient estimates for polynomials.}
To check items (1) and (2)   of Theorem \ref{thm3} for polynomials, we need  the    gradient estimates similar to Theorem \ref{thm5}. Based on the estimate (\ref{eq:eq222}) these ones are  easier.
\begin{lem}
\label{lem 10}
    Let $\om(t)$ be a growth function of  type $n$, $n\geq 2$, let $D\subset \mathbb{R}^d$ be a   $C^{\om}$ domain and let $T$ be  a $C^{2n-1}$-smooth  homogeneous Calder\'{o}n-Zygmund operator with an even kernel.   Let $q$, $n<q<n+1$, be a parameter of the almost decreasing property (\ref{eq:eq42}) from Definition \ref{df1}. Then for any  homogeneous polynomial $S$ of degree $k$, $k=1,\dots, n-1$ we have uniformly in $x$
  \begin{equation}
 \label{eq:eq223}
   | \nabla ^{n}T_D S_x(x)|\lesssim \rho(x)^{q-n-1},
    \end{equation}
where $S_x(\cdot)=S(\cdot-x)$.
\end{lem}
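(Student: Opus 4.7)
My plan is to follow the proof of Theorem \ref{thm5} step by step, treating $S(x-x_0)$ as an additional polynomial factor. Fix $x_0\in D$ with $\rho=\rho(x_0)$ and adopt the setup of Section~3.1: closest boundary point at the origin, tangent hyperplane $x_d=0$, $x_0=(0',\rho)$, and inside the $R$-window $Q$ the boundary is the graph of $A$ with $|A-P|\lesssim\omega(|x'|)$, $P(0')=|\nabla P(0')|=0$. Since $\deg S=k<n$, differentiating under the $PV$ is justified by the corollary after Lemma \ref{lem17} (the factor $S(x-x_0)$ is $y$-independent and can be pulled outside the differentiation), giving
\[
I:=\nabla^n T_D S_{x_0}(x_0)=PV\int_D(\nabla^n K)(x_0-x)\,S(x-x_0)\,dx.
\]
I split $I$ as $\int_{D\setminus Q}+\int_{(D\cap Q)\triangle W_Q}+PV\int_{W_Q}$ with $W_Q=\{x_d>P(x')\}\cap Q$. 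The first piece is absolutely bounded by $r_0^{-d-n}\,\mathrm{diam}(D)^k$, a $\rho$-independent constant, and hence $\lesssim\rho^{q-n-1}$ since $q<n+1$.

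The symmetric-difference (sector) piece is dominated absolutely by $\int|x-x_0|^{k-d-n}\,dx$ over the thin strip $\{|x_d-P(x')|\lesssim\omega(|x'|),\ |x'|<r_0\}$. The key new estimate over Section~3.3 is the bound $\omega(t)\lesssim t^{\,q-k}$ for $t<r_0$, valid because $q<n+1$ and $k\geq 1$ force $q-k\in(0,n)$, so the almost-increasing property (\ref{eq:eq32}) applies with exponent $q-k$. Splitting the $x'$-integration at $|x'|=\rho$ and repeating the two substitutions of Section~3.3 yields both pieces $\lesssim\rho^{q-n-1}$; the same estimate will later control the tail $\int_{\Pi_+\setminus Q}$.

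For $V=PV\int_{W_Q}$, I first argue that $PV\int_{\Pi_+}(\nabla^n K)(x_0-x)\,S(x-x_0)\,dx=0$ with $\Pi_+=\{x_d>0\}$. Writing $S$ as a sum of monomials $u^\alpha$ ($|\alpha|=k$), this reduces to showing that each product $u^\alpha(\partial^n K)(u)$ is a finite sum of derivatives of even smooth homogeneous Calder\'on-Zygmund kernels. I prove this by induction on $n-k$: in the base case $n=k$, Lemma \ref{lem17} identifies $u^\alpha\,\partial^k K$ itself as an even CZ kernel; for the inductive step I use the Leibniz identity
\[
u^\alpha\,\partial^{\beta_1+\beta_2}K=\partial^{\beta_2}(u^\alpha\,\partial^{\beta_1}K)-\sum_{0<\gamma\leq\beta_2}\binom{\beta_2}{\gamma}(\partial^\gamma u^\alpha)\,\partial^{\beta_2-\gamma+\beta_1}K,
\]
with $|\beta_1|=k$, $|\beta_2|=n-k$, so the first term is a derivative of an even CZ kernel (Lemma \ref{lem17}) and each remainder has the same structure with strictly smaller $n$. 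Lemma \ref{lem15} then kills each derivative-of-CZ-kernel summand on $\Pi_+$. Adding and subtracting $\int_{\Pi_+\cap Q}$ as in Section~3.4, and absorbing the tail $\int_{\Pi_+\setminus Q}$ into the sector estimate, $V$ reduces to the graph-region integral
\[
-\int_{|x'|<r_0}dx'\int_0^{P(x')}[\partial^n K\cdot S](x',x_d-\rho)\,dx_d.
\]
Taylor-expanding in $x_d$ around $0$ as in Sections~3.4--3.6, the main $I'$-type terms are killed by extending the $x'$-integration to $\mathbb{R}^{d-1}$ and applying Lemma~\ref{lem16} together with the rehomogenization trick (inserting appropriate powers of $x_d$, equivalently of $-\rho$, so that each summand becomes a genuine degree $-d$ even CZ kernel on $\mathbb{R}^d$ via Lemma~\ref{lem17}); the $I''$-type remainders and the $|x'|>r_0$ tails are bounded absolutely, using once more $\omega(t)\lesssim t^{q-k}$ to produce $\lesssim\rho^{q-n-1}$.

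The main obstacle is the $I'$-step bookkeeping. By Leibniz, $\partial_d^{j-1}[\partial^n K\cdot S](x',-\rho)$ expands as $\sum_i\binom{j-1}{i}\partial_d^i\partial^n K(x',-\rho)\cdot\partial_d^{j-1-i}S(x',-\rho)$, and each $\partial_d^{j-1-i}S(x',-\rho)$ is a polynomial in $x'$ whose coefficients involve powers of $\rho$ coming from Taylor coefficients of $S(x',\cdot)$ at $-\rho$. These polynomials must be combined with the factor $Q_{n+j}(x')$ from the decomposition $P(x')^j=Q_{n+j}+T_{n+j}$ and balanced by the correct power of $x_d$ (equivalently $-\rho$) so that the total integrand, viewed in $(x',x_d)\in\mathbb{R}^d$, is an even homogeneous CZ kernel of degree $-d$; once this is arranged, Lemma~\ref{lem17} certifies the kernel property and the corollary to Lemma~\ref{lem16} forces the integral over the hyperplane $x_d=-\rho$ to vanish. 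The inductive half-space-vanishing step of the previous paragraph is the secondary subtle point, but it is a direct consequence of iterated application of Lemma~\ref{lem17}.
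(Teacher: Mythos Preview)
Your strategy of rerunning the entire proof of Theorem \ref{thm5} with the extra factor $S$ is very different from the paper's, which is a short \emph{reduction} to Theorem \ref{thm5} rather than a reproof. The paper observes that $N=\partial^{k}(KS)$ is, by Lemma \ref{lem17}, again an even $C^{2n-k-1}$-smooth Calder\'on--Zygmund kernel, so that
\[
\partial_x^{\,n}\!\int_D K(y-x)S(y-x)\,dy=(-1)^k\,\partial_x^{\,l}\!\int_D N(y-x)\,dy,\qquad l:=n-k.
\]
It then checks that the $C^{\omega}$ domain $D$ is automatically a $C^{\omega'}$ domain for the \emph{power} growth function $\omega'(t)=t^{\,l+q-n}$ of type $l$ (since $|A-P_l|\lesssim\max(|x'|^{l+1},\omega(|x'|))\lesssim|x'|^{l+q-n}$), and applies Theorem \ref{thm5} at order $l$ with kernel $N$ to obtain $\omega'(\rho)/\rho^{l+1}=\rho^{q-n-1}$ directly; the case $l=1$ is covered by \cite{V1}. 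That is the whole argument.

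Your route has a concrete gap in the $I'$ step. After the Leibniz expansion you describe, a typical $I'$ summand is $\partial_d^{\,i}\partial^{n}K(x',-\rho)\cdot\partial_d^{\,j-1-i}S(x',-\rho)\cdot Q_{n+j}(x')$, and the total $x'$-degree of the polynomial factor can reach $(n+j-1)+\bigl(k-(j-1-i)\bigr)=n+i+k$, which for $k\ge1$ exceeds the order $n+i$ of the differentiated kernel. The extension of the $x'$-integral to all of $\mathbb{R}^{d-1}$ then \emph{diverges} (the integrand is of size $|x'|^{k-d}$ at infinity), so Lemma \ref{lem16} cannot be invoked and the claimed vanishing fails. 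The correct cutoff is at $l+j$, not $n+j$: one must split $P^j=Q_{l+j}+T_{l+j}$ at the effective order $l=n-k$, which is exactly the paper's insight in disguise. Equivalently, you could feed your own half-space decomposition $(\partial^nK)S=\sum_i\partial^{l}N_i$ back into the $J$ piece and quote Sections~3.4--3.6 verbatim for each even CZ kernel $N_i$ at order $l$, with the degree-$l$ boundary polynomial $P_l$ and growth function $\omega'$. A minor further slip: your half-space induction should be on $k$ (or on $n$), not on $n-k$, since the Leibniz remainders carry $k'=k-|\gamma|$ and $n'=n-|\gamma|$ with $n'-k'=n-k$ unchanged.
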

\begin{proof}
We will prove the esimate (\ref{eq:eq223}) for  each derivative $\partial_x^{n}$ instead of $\nabla ^{n}$. One gets
$$\partial_x^{n} \int_D K(y-x)S(y-x)dy =\int_D  \partial_x^{n}(K(y-x)S(y-x))dy $$
$$=(-1)^k \partial_x^{n-k}\int_D  \partial_y^{k}(K(y-x)S(y-x))dy. $$
By Lemma \ref{lem17}, $N(x)= \partial_x^{k}(K(x)S(x))$,  $k=1,\dots, n-1$, is  a    $C^{2n-k-1}$-smooth even homogeneous Calder\'{o}n-Zygmund kernel.
 We check the assumptions  of   Theorem \ref{thm5} by redesignation of parameters.
  Denote $l=n-k$.  Since $2n-k-1 \geq 2l-1$, we have that $N$ is  a   $C^{2l-1}$-smooth even homogeneous Calder\'{o}n-Zygmund kernel.
 Consider the boundary parametrization from Section 3.1: there are  a function $A$ with   a Taylor polynomial $P_{n}(x')$ around origin  and a constant $c_0$ such that
   \[ |A(x')-P_{n}(x')|\leq c_0 \omega(|x'|), \:|x'|<r_0 .\]
   For     the Taylor polynomial $P_{l}$ of order $l$ of the function $A$ around origin we have
  \[I= |A(x')-P_{l}(x')|\leq c_1 \max (|x'|^{l+1}, \omega(|x'|)), \]
 where $ |x'|<r_0$, $l=1,\dots, n-1$ and $c_1=c_1(D)$. By (\ref{eq:eq42}), choose $q$, $n<q<n+1$, such  that
 $\frac{\omega(t)}{t^{q}}$ is almost decreasing. Particularly, we  have
  $t^{q}\lesssim \om(t)\lesssim t^{q-1}$.
 Then
  \[I\lesssim \max (|x'|^{l+1}, |x'|^{q-1})\lesssim |x'|^{l+q-n},\]
 for $ l=1,\dots, n-1$.

Theorem \ref{thm5} is applicable with the following parameters:   the  growth function $\om'(t)=t^{l+q-n}$ of type $l$,  the  bounded domain  $D\subset \mathbb{R}^d$  with $C^{\om'}$  boundary,    and   the $C^{2l-1}$-smooth  even  Calder\'{o}n-Zygmund kernel $N$,  So, we obtain
\[\left|\partial_x^{n} \int_D K(y-x)S(y-x)dy\right|= \left|\partial_x^{l} \int_D N(y-x)dy\right|\]
\[\lesssim \frac{\om'(\rho(x))}{\rho(xy)^{l+1}}=\rho(x)^{q-n-1}\]
if $l=2,\dots, n-1$.

Observe, that formally, we can not apply Theorem \ref{thm5} for $l=1$. In this case, it holds by Proposition
2  \cite{V1}.
\end{proof}
\begin{lem}
\label{lem 20}
Let $\om(t)$ be a growth function of  type $n$, $n\geq 2$, let $D\subset \mathbb{R}^d$ be a   $C^{\om}$ domain and let $T$ be  a $C^{2n-1}$-smooth  homogeneous Calder\'{o}n-Zygmund operator with an even kernel.   Let $q$, $n<q<n+1$,   be a parameter of the almost decreasing property (\ref{eq:eq42}) from Definition \ref{df1}. Then  for any  homogeneous polynomial $S$ of degree $n-1$, and for any cube $Q$, $2Q\in D$, with the centre $x_0$ we have
\begin{equation}\label{eq:eq64}
|\nabla^{n} T_D S_{x_0}(x)|\lesssim\sum_{i=0}^{n-1} |x-x_0|^i \rho(x_0)^{q-n-1} +|x-x_0|^n  \frac{\om(\rho(x_0))}{\rho(x_0)^{n+1}}.
\end{equation}
with a constant independent of $Q$.
\end{lem}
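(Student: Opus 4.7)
My plan is to expand $S(y-x_0)$ as a polynomial Taylor series in the shift $x-x_0$, reducing the $n$-th gradient estimate to a finite sum of terms each controlled by Lemma \ref{lem 10} or, in one extremal case, by Theorem \ref{thm5} directly.

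Since $S$ is a homogeneous polynomial of degree $n-1$, the identity
\[
S(y-x_0)=\sum_{|\alpha|\le n-1}\frac{(x-x_0)^\alpha}{\alpha!}(\partial^\alpha S)(y-x)
\]
is exact. Substituting into $T_D S_{x_0}(x)=\int_D K(y-x)S(y-x_0)\,dy$ and applying the Leibniz rule to a partial derivative $\partial^\mu$ with $|\mu|=n$ gives
\[
\partial^\mu T_D S_{x_0}(x)=\sum_{|\alpha|\le n-1}\sum_{\substack{\nu\le\mu\\ \nu\le\alpha}}\binom{\mu}{\nu}\frac{(x-x_0)^{\alpha-\nu}}{(\alpha-\nu)!}\,\partial^{\mu-\nu}T_D[(\partial^\alpha S)_x](x),
\]
where $(\partial^\alpha S)_x(y)=(\partial^\alpha S)(y-x)$ is homogeneous in $y-x$ of degree $k:=n-1-|\alpha|$. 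For each pair $(\alpha,\nu)$ the differentiation order $m:=n-|\nu|$ strictly exceeds $k$ (since $|\nu|\le|\alpha|$), so the mechanism of Lemma \ref{lem 10} applies: pulling $k$ derivatives through yields the even Calder\'on--Zygmund kernel $N_\alpha:=\partial^k(K\cdot\partial^\alpha S)$, which by Lemma \ref{lem17} is $C^{n+|\alpha|}$-smooth on $\mathbb R^d\setminus\{0\}$, and reduces the estimate to that of $|\nabla^l \int_D N_\alpha(y-x)\,dy|$ with $l:=m-k=1+|\alpha|-|\nu|$. For $l\le n-1$, Theorem \ref{thm5} applied with kernel $N_\alpha$ and growth function $\omega'(t)=t^{l+q-n}$ of type $l$---whose boundary compatibility $|A-P_l|\lesssim |x'|^{l+q-n}$ is exactly the verification made in the proof of Lemma \ref{lem 10}---gives $|\partial^{\mu-\nu}T_D[(\partial^\alpha S)_x](x)|\lesssim \rho(x)^{q-n-1}$. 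The sole exception $l=n$ forces $|\alpha|=n-1$, $|\nu|=0$; there $\partial^\alpha S$ is a constant, $N_\alpha$ is a scalar multiple of $K$, and Theorem \ref{thm5} gives $\lesssim \omega(\rho(x))/\rho(x)^{n+1}$.

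Assembling the bounds and using that $2Q\subset D$ implies $|x-x_0|\lesssim \rho(x_0)$ and $\rho(x)\sim \rho(x_0)$, the contributions with $|\alpha-\nu|=i\in\{0,\dots,n-2\}$ supply the $\rho(x_0)^{q-n-1}$ terms of (\ref{eq:eq64}), while the exceptional case contributes at most $|x-x_0|^{n-1}\omega(\rho(x_0))/\rho(x_0)^{n+1}$. The almost-increasing property of $\omega(t)/t^r$ applied with $r=q-n+1<n$ (admissible since $n\ge 2$ and $q\in(n,n+1)$ force $q<2n-1$) yields $\omega(\rho)/\rho^2\lesssim \rho^{q-n-1}$, and combined with $|x-x_0|\le \rho(x_0)/2$ this absorbs the exceptional contribution into the $i=0$ term of (\ref{eq:eq64}), completing the proof. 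The principal difficulty is the bookkeeping across the multi-index cases and verifying that the boundary polynomial approximation $|A-P_l|\lesssim |x'|^{l+q-n}$ holds uniformly at each intermediate order $l\le n-1$.
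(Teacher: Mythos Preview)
Your proof is correct and follows essentially the same approach as the paper's: expand $S_{x_0}(y)$ as a polynomial in $x-x_0$ with coefficients that are homogeneous polynomials in $y-x$, apply the Leibniz rule, and estimate each resulting term via the mechanism of Lemma~\ref{lem 10} (for $l\le n-1$) or Theorem~\ref{thm5} directly (for the extremal case $l=n$). Your final absorption step, showing that the exceptional $|x-x_0|^{n-1}\om(\rho)/\rho^{n+1}$ contribution can be subsumed into the $i=0$ term via the almost-increasing property, is a correct refinement that the paper does not carry out explicitly.
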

\begin{proof}

We will prove the esimate above for each derivative $\partial_x^{n}$ instead of $\nabla ^{n}$.
Consider a homogeneous expansion into the finite sum with respect to variable $y$ around $x\in Q$
\[S_{x_0}(y)=S(y-x+x-x_0)=\sum S'(x-x_0)S''(y-x),\]
where $S'$ and $S''$ are certain homogeneous polynomials in  $d$ real variables, and  $deg S'=k$, $deg S''=n-1-k$, $k=0,\dots, n-1$.
Differentiating by the Leibniz rule, we represent $\partial_x^{n} T_D S_{x_0}(x)$ as a finite sum of next summands
\[\partial_x^{i}S'(x-x_0)\partial_x^{n-i} \int_D K(x-y)S''(y-x)dy \]
\[=\partial_x^{i}S'(x-x_0)  \partial_x^{1+k-i} (-1)^{n-1-k}\int_D  \partial_y^{n-1-k}(K(x-y)S''(y-x))dy \]
with certain derivatives $\partial^i_x$  of order $i=0,\dots, k$.
Clearly, we have
\begin{equation}\label{eq:eq69}
|\partial_x^{i}S'(x-x_0)|\lesssim |x-x_0|^{k-i}.
 \end{equation}

 To estimate  a quantity
\[ | \partial_x^{1+k-i} \int_D  \partial_y^{n-1-k}(K(x-y)S''(y-x))dy | \]
 we follow Lemma \ref{lem 10}. So,
 denote $l=1+k-i$, $l=1,\dots, n$, define  the kernel $N=\partial^k(KS'')$, and take  a type $l$ growth function $\om_l(t)=t^{l+q-n}$.
 We obtain
\[|\partial_x^{l} \int_D N(x-y)dy|\lesssim \frac{\rho(x)^{l+q-n}}{\rho(x)^{l+1}}\lesssim \rho(x)^{q-n-1},\]
 when $l=1,\dots, n-1$.

If $l=n$, then $i=k=0$; so exactly by Theorem \ref{thm5}  one gets
\[|\partial_x^{n} \int_D  K(x-y)dy|\lesssim \frac{\om(\rho(x))}{\rho(x)^{n+1}}.\]

Combining this with the estimate (\ref{eq:eq69}) one has
\[|\partial_x^{n} T_D S_{x_0}(x)|\lesssim\sum_{i=0}^{n-1} |x-x_0|^i \rho(x)^{q-n-1} +|x-x_0|^n  \frac{\om(\rho(x))}{\rho(x)^{n+1}},\]
and we obtain (\ref{eq:eq64}) for $x\in Q$.
\end{proof}

 We need two lemmas, which are valid in the general Lipschitz domains.  Define the function that is the the denominator of $\omt$ in (\ref{eq:eq2}):
 \begin{equation}\label{eq:eq67}
\xi(x)=\max\{1, \int_{x}^{1}\om (t) t^{-n-1}dt\}.
 \end{equation}

\begin{lem}[{\cite[Lemma 2.8]{VD}}]
  \label{lem46}
 Given a growth function  $\om$  of type $n$,  a  bounded Lipschitz domain  $D\subset \mathbb{R}^d$, a function $f\in C_*^{\om}(D)$, a cube $Q$ with size length $\ell$ and a polynomial $P_{f,Q}\in \mathcal{P}_n$ such that    $\|f-P_{f,Q}\|_{L^{\infty}(Q)}\lesssim\|f\|_{\omega,D}\om(\ell).$
 Then the estimate
 \[\|P_{f,Q}\|_{L^{\infty}(D)}\lesssim \|f\|_{\om,D}\xi(\ell)\]
 holds with a constant  independent of $f$.
\end{lem}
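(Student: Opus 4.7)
The plan is a chain-of-cubes telescoping argument, comparing $P_{f,Q}$ with a polynomial $P_{f,Q^\ast}$ attached to a reference cube $Q^\ast\subset D$ of side length $\ell^\ast\approx\mathrm{diam}(D)$. Because a polynomial of degree $\leq n$ on a cube of the ambient scale has sup-norm on $D$ bounded by a fixed constant times its sup-norm on $Q^\ast$, and because the hypothesis gives $\|f-P_{f,Q^\ast}\|_{L^\infty(Q^\ast)}\lesssim\|f\|_{\om,D}\om(\ell^\ast)\lesssim\|f\|_{\om,D}$, the contribution of $P_{f,Q^\ast}$ is absorbed into the ``$1$'' inside $\xi(\ell)$ after fixing a canonical representative of $f$ modulo $\mathcal{P}_n$. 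So the real task is to bound $\|P_{f,Q}-P_{f,Q^\ast}\|_{L^\infty(D)}$ by $\|f\|_{\om,D}\int_\ell^{1}\om(t)t^{-n-1}\,dt$.

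For this I would build a Boman-type chain $Q=Q_0,Q_1,\ldots,Q_N=Q^\ast$ of cubes contained in $D$, with dyadically growing side lengths $\ell_k\approx 2^k\ell$, and such that neighbouring cubes $Q_{k-1}$, $Q_k$ share a common subcube $R_k$ of side length $\sim\ell_{k-1}$. Such a chain exists since every bounded Lipschitz domain is a John domain. On $R_k$, the triangle inequality applied to $f-P_{f,Q_{k-1}}$ and $f-P_{f,Q_k}$ gives
\[\|P_{f,Q_{k-1}}-P_{f,Q_k}\|_{L^\infty(R_k)}\lesssim \|f\|_{\om,D}\,\om(\ell_k).\]
Since $P_{f,Q_{k-1}}-P_{f,Q_k}\in\mathcal{P}_n$, a Markov--Nikolskii inequality (equivalence of sup norms on nested cubes of bounded ratio, for fixed polynomial degree) amplifies this estimate from $R_k$ to $D$ at the cost of a factor $(\mathrm{diam}(D)/\ell_k)^{n}$:
\[\|P_{f,Q_{k-1}}-P_{f,Q_k}\|_{L^\infty(D)}\lesssim \ell_k^{-n}\,\om(\ell_k)\,\|f\|_{\om,D}.\]
Summing along the telescope produces a geometric series $\sum_{k}\om(\ell_k)\ell_k^{-n}$, which by the almost monotonicity properties (\ref{eq:eq42}), (\ref{eq:eq32}) of $\om$ is comparable with the integral $\int_\ell^{1}\om(t)\,t^{-n-1}\,dt\leq\xi(\ell)$, giving the required bound.

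The main obstacle is the geometry: the chain must be built inside the (possibly non-convex) Lipschitz domain $D$, with quantitative overlap $|R_k|\gtrsim|Q_{k-1}|$ -- if the overlap were only a lower-dimensional piece, the Markov step would lose a factor that destroys the telescoping. Controlling this near $\partial D$, and checking that the growth $\ell_k\approx 2^k\ell$ can be sustained until one reaches the fixed reference scale $\ell^\ast$, is the real technical content; all implicit constants depend only on the Lipschitz character of $D$. The remaining ingredients -- the triangle inequality, Markov's inequality on a pair of comparable cubes, and the comparison of the geometric sum with $\int_\ell^1\om(t)t^{-n-1}\,dt$ via the almost monotonicity of $\om$ -- are routine.
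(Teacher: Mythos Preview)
The paper does not actually prove this lemma: it is quoted verbatim from \cite[Lemma~2.8]{VD} and used as a black box in the proof of Lemma~\ref{lem44}. So there is no in-paper argument to compare against.

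Your telescoping chain-of-cubes argument is the standard proof of such an estimate and is correct in outline. The three ingredients you isolate --- a John/Boman chain of dyadically growing cubes inside the Lipschitz domain, the Markov--Bernstein rescaling $\|P\|_{L^\infty(D)}\lesssim(\mathrm{diam}\,D/\ell_k)^n\|P\|_{L^\infty(R_k)}$ for $P\in\mathcal P_n$, and the comparison $\sum_k\om(2^k\ell)(2^k\ell)^{-n}\approx\int_\ell^1\om(t)t^{-n-1}\,dt$ via the doubling-type bounds (\ref{eq:eq42})--(\ref{eq:eq32}) --- are exactly what is needed, and your identification of the chain geometry as the only nontrivial point is accurate.

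You are also right to flag the normalisation issue. Taken literally the statement is false: if $f\in\mathcal P_n$ then $\|f\|_{\om,D}=0$ while $P_{f,Q}=f$ can have arbitrarily large sup-norm on $D$. The estimate only makes sense once a representative of $f$ modulo $\mathcal P_n$ is fixed (equivalently, once the reference polynomial $P_{f,Q^\ast}$ is subtracted, as you do). In the paper's sole use of the lemma (inside Lemma~\ref{lem44}) the conclusion is immediately multiplied by $\ell^{n+1}$ and fed into $\xi(\ell)\ell^{n+1}\lesssim\om(\ell)$, so any additive constant depending on the chosen representative of $f$ is harmless there; but as an independent statement the normalisation must be made explicit, and your proof does this correctly.
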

 The second one is a variant of Marchaud argument (see for powered growth functions, e.g., \cite[Proposition 4.5]{KK}).
\begin{lem}
  \label{lem44}
 Given  growth function $\om$  of type $n$,  a  bounded domain  $D\subset \mathbb{R}^d$  with  the Lipschitz   boundary, a function $f\in C_*^{\om}(D)$ and a polynomial $S$,   the estimate
 \begin{equation}\label{eq:eq72}
 \|fS\|_{\om,D}\lesssim \|S\|_{L^{\infty}(D)}\|f\|_{\om,D}
 \end{equation}
  holds with a constant  independent of $f$.
\end{lem}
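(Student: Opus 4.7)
My plan is to exploit the fact that, after subtracting a good polynomial approximation $P_{f,Q}$ of $f$ on a cube $Q$, the remainder $P_{f,Q}\cdot S$ is itself a polynomial, from which an element of $\mathcal{P}_n$ serving as the approximating polynomial for $fS$ on $Q$ can be extracted by truncating the Taylor expansion at the centre of $Q$. The truncation error will be controlled by combining the equivalence of norms on the finite-dimensional space $\mathcal{P}_n$ with the $L^{\infty}(D)$-bound on $P_{f,Q}$ supplied by Lemma \ref{lem46}, and the whole estimate will then reduce to a numerical inequality between $\om$ and the auxiliary function $\xi$ from (\ref{eq:eq67}).

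Concretely, fix a cube $Q\subset D$ with centre $x_0$ and side length $\ell$, and pick $P_{f,Q}\in \mathcal{P}_n$ with $\|f-P_{f,Q}\|_{L^{\infty}(Q)}\lesssim \|f\|_{\om,D}\,\om(\ell)$. I would split
\[fS=(f-P_{f,Q})S+P_{f,Q}S.\]
The first summand is bounded on $Q$ trivially by $\|S\|_{L^{\infty}(D)}\|f\|_{\om,D}\om(\ell)$. For the second, expand in multivariate Taylor form at $x_0$,
\[(P_{f,Q}S)(x)=\sum_{\alpha}c_\alpha(x-x_0)^\alpha,\qquad c_\alpha=\frac{\partial^\alpha(P_{f,Q}S)(x_0)}{\alpha!},\]
and take $R(x):=\sum_{|\alpha|\le n}c_\alpha(x-x_0)^\alpha\in\mathcal{P}_n$ as the candidate. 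Then on $Q$,
\[|P_{f,Q}(x)S(x)-R(x)|\le \sum_{|\alpha|>n}|c_\alpha|\,\ell^{|\alpha|}.\]

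To bound $|c_\alpha|$, I would apply the Leibniz rule to write $\partial^\alpha(P_{f,Q}S)$ as a finite sum of products $\partial^\beta P_{f,Q}\cdot \partial^{\alpha-\beta}S$ with $|\beta|\le n$. Since $\mathcal{P}_n$ and $\mathcal{P}_{\deg S}$ are finite dimensional and $D$ is a fixed bounded domain, all Banach norms on these spaces are equivalent, so $|\partial^\beta P_{f,Q}(x_0)|\lesssim \|P_{f,Q}\|_{L^{\infty}(D)}$ and $|\partial^{\alpha-\beta}S(x_0)|\lesssim \|S\|_{L^{\infty}(D)}$. Lemma \ref{lem46} then supplies $\|P_{f,Q}\|_{L^{\infty}(D)}\lesssim \|f\|_{\om,D}\,\xi(\ell)$. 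Since $\deg(P_{f,Q}S)$ is bounded and $\ell$ is bounded above by $\mathrm{diam}(D)$, summing the finitely many Taylor terms with $|\alpha|>n$ yields
\[\|P_{f,Q}S-R\|_{L^{\infty}(Q)}\lesssim \|f\|_{\om,D}\|S\|_{L^{\infty}(D)}\,\xi(\ell)\,\ell^{n+1}.\]

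The main obstacle is the purely numerical inequality $\xi(\ell)\,\ell^{n+1}\lesssim\om(\ell)$, which I would verify by splitting according to (\ref{eq:eq67}). When $\xi(\ell)=1$, the almost decreasing property (\ref{eq:eq42}) applied with $s=1/\ell$, $t=\ell$ gives $\om(\ell)\gtrsim \ell^{q}$; since $q<n+1$ and $\ell\lesssim 1$, this yields $\ell^{n+1}\lesssim\om(\ell)$. When $\xi(\ell)=\int_\ell^{1}\om(t)t^{-n-1}\,dt$, I would choose $r$ with $q-1\le r<n$, which is possible because $q-1<n$; the almost increasing property (\ref{eq:eq32}) then forces $\om(t)\lesssim t^{r}$ for $t<1$, so $\int_\ell^{1}\om(t)t^{-n-1}\,dt\lesssim \ell^{r-n}$, whence $\xi(\ell)\ell^{n+1}\lesssim \ell^{r+1}\le \ell^{q}\lesssim\om(\ell)$. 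Combining the two summands and taking the supremum over $Q\subset D$ then yields (\ref{eq:eq72}).
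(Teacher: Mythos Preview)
Your proposal is correct and follows essentially the same route as the paper's proof: both split $fS=(f-P_{f,Q})S+P_{f,Q}S$, replace $P_{f,Q}S$ by its degree-$n$ Taylor polynomial at the cube centre, invoke Lemma~\ref{lem46} to control $\|P_{f,Q}\|_{L^\infty(D)}$, and conclude via the inequality $\xi(\ell)\ell^{n+1}\lesssim\om(\ell)$. The only cosmetic differences are that the paper appeals to a Bernstein inequality on $D$ where you use equivalence of norms on finite-dimensional polynomial spaces, and that you spell out the verification of $\xi(\ell)\ell^{n+1}\lesssim\om(\ell)$ in detail whereas the paper records it in one line.
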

\begin{proof}
 Let $Q$ be a cube such that $2Q\subset D$.
 Since $f\in C_\om(D)$, by Definition \ref{df2} one has   $\|f-P_{f,Q}\|_{L^{\infty}(Q)}\lesssim \|f\|_{\omega,D}\om(\ell)$,
 with a certain polynomial $P_{f,Q}\in \mathcal{P}_n$.  Hence,
 \begin{equation}\label{eq:eq62}
 \|Sf-SP_{f,Q}\|_{L^{\infty}(Q)}\lesssim \|S\|_{L^{\infty}(D)}\|f\|_{\omega,D}\om(\ell).
 \end{equation}
The conclusion of the lemma  holds if we may replace the polynomial $SP_{f,Q}$ by a polynomial of degree $n$ in inequality (\ref{eq:eq62}). To do this, take for  $S P_{f,Q}$ the Taylor polynomial $T_{x_0}$ of degree $n$ around the centre $x_0$. We have
\[I=\|SP_{f,Q}-T_{x_0}\|_{L^{\infty}(Q)}\lesssim_{n,d}\|\nabla^{n+1} (SP_{f,Q})\|_{L^{\infty}(Q)} \ell^{n+1}\]
\[\lesssim_{n,d}\|\nabla^{n+1} (SP_{f,Q})\|_{L^{\infty}(D)} \ell^{n+1}\]
\[\lesssim_{n,d}\|S\|_{L^{\infty}(D)}\|P_{f,Q}\|_{L^{\infty}(D)} \ell^{n+1}\]
\[\lesssim_{n,d}\|S\|_{L^{\infty}(D)} \|f\|_{\om,D} \xi(\ell) \ell^{n+1},\]
by Bernstein inequality on $D$ and by Lemma \ref{lem46}.
Since almost decreasing property (\ref{eq:eq42}), we have       $\xi(\ell)\ell^{n+1}\lesssim \om(\ell)$, and therefore,
\[I\lesssim \|S\|_{L^{\infty}(D)}\|f\|_{\om,D} \om(\ell),\]
as required.
\end{proof}
\subsection{Checking of point (1) of Theorem \ref{thm3} for  constants.}
\begin{proof}
By   Theorem \ref{thm5} applied for the following parameters: the type $n+1$, $n\geq 1$, growth function $t \om(t)$, $C^{\om,1}$ domain $D$ and $C^{2n+1}$ -smooth  homogeneous Calder\'{o}n-Zygmund operator $T_D$ with an even kernel, one has the gradient estimate
\[|\nabla^{n+1}T_D\chi_D(x)|\lesssim \frac{\rho(x)\om(\rho(x))}{\rho^{n+2}(x)}=\frac{\om(\rho(x))}{\rho^{n+1}(x)},\;x\in D.\]
    By Taylor's formula  for  $T_D\chi_D$  with the Taylor polynomial $P_{n,x_0}$ of  degree $n$ around the centre $x_0$ of a cube $Q$ such that   $2Q\subset D$, we get
\[
\begin{aligned}
  |T_D\chi_D(x)-P_{n,x_0}(x)|  & \lesssim \sup_Q|\nabla^{n+1}T_D\chi_D(x)||x-x_0|^{n+1}  \\
  & \lesssim \frac{\om(\rho(x))}{\rho^{n+1}(x)}|x-x_0|^{n+1}\lesssim \om(|x-x_0|)
\end{aligned}
\]
uniformly with respect on $ x\in Q$.  By Proposition 2.2 \cite{VD},
 $T_D\chi_D \in C^{\om}(D)$, which clearly follows $T_D\chi_D \in C^{\om}_*(D)$, and  item (1) of Theorem \ref{thm3} for constants is completed.
\end{proof}
\subsection { Checking of point (1) of Theorem \ref{thm3} for polynomials.}
\begin{proof}
We repeat the  estimates  of the reminder in the Taylor's formula. For this put the type $n+1$ growth function $t \om(t)$, $C^{\om,1}$ domain $D$ and $C^{2n+1}$ -smooth
homogeneous Calder\'{o}n-Zygmund operator $T_D$ with an even kernel. Take a polynomial
    $S\in\mathcal{P}_n$  and  a cube $Q$ such that $2Q\subset D$.
We have
\[T_DS=\int_D K(x-y)S(x)dx\]
\[=\int_D K(x-y)(S(x)-S(y))dx+S(y)\int_D K(x-y)dx\]
\[=\int_D K(x-y)(S(x)-S(y))dx+S(y)T_D\chi_D(y)=A+B.\]
 For term A we will apply  the Taylor's formula. So, consider the homogeneous expansion
\[S(x)-S(y)=\sum_{k=1}^{n}S_k(y-x)\]
where $S_k$ are homogeneous polynomials of degree $k,\; k=1,\dots, n$. Define $S_{k,x}(\cdot)=S_k(\cdot-x)$ and put $Q$ be a cube with centre $x_0$  and $2Q\subset D$.    For  each $T_DS_{k,x}\chi_D$   the  relevant reminder in the Taylor's formula of $n$ order  by Lemma \ref{lem 10} for the growth function  $t\om(t)$ of type $n+1$  is estimated  as following
   \[ \sup_Q|\nabla^{n+1}T_DS_{k,x}(x)\chi_D(x)||x-x_0|^{n+1}\]
  \[ \lesssim \rho(x_0)^{q+1-n-2}|x-x_0|^{n+1}\]
     \[ \lesssim |x-x_0|^{q}\lesssim \om(\ell) .\]
For  term B we apply Lemma \ref{lem44} with $f=T_D\chi_D $ and easily obtain (\ref{eq:eq72}).  This completes point (1) of Theorem \ref{thm3}.
\end{proof}

\subsection{Proof of  point (2) of Theorem \ref{thm3}. }
To finish the proof of Theorem \ref{thm4} it remains to check point (2) of Theorem \ref{thm3}.
 Take a cube $Q$, $2Q\in D$, with the centre $x_0$ and   a  homogeneous  polynomial $P_{x_0}(x)= P(x-x_0)$ of order $n>0$. We again estimate the reminder of  the Taylor's formula for  $T_D\chi_D$  and its Taylor polynomial $P_{n,x_0}$ of  degree $n$ around the centre $x_0$ of a cube $Q$ with size length $\ell$.

For any parameter $q$ such that   the almost decreasing condition (\ref{eq:eq42}) holds, we may replace it by any other $q'$,  $q<q'<n+1$, so  that  $\frac{\omega(t)}{t^{q'}}$ is almost decreasing as well, and therefore, $\om(t)\gtrsim t^{q'}$. Given $q$, we choose this $q'$ in the form
$q'=q+n-r$, where $r$ is a parameter from the almost increasing condition (\ref{eq:eq32}) such that $r+1>q$.  Then (\ref{eq:eq64}) is replaced by
 \begin{equation}\label{eq:eq63}
|\partial_x^{n+1} T_D P_{x_0}(x)|\lesssim\rho(x_0)^{q'-n-1}+   \frac{\om(\rho(x_0))}{\rho(x_0)}.
 \end{equation}
We have a simple  estimate on  $\omt(t)$ from below.
  \begin{lem}\label{lem47}
   $\omt(t)\gtrsim t^{q'}$.
  \end{lem}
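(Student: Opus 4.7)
The plan is to use the two size conditions in Definition \ref{df1} to sandwich both $\om$ and the denominator $\xi$ of $\omt$ between pure powers of $t$, and then simply divide. The whole argument should be a few lines of elementary manipulation.

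First, I would exploit the almost-increasing property (\ref{eq:eq32}): taking $t=1$ and letting $s \in (0,1]$ range, one immediately obtains the pointwise upper bound $\om(s) \lesssim s^{r}$ on $(0,1]$. Plugging this into the integral defining $\xi$ and using that $r < n$, so the integrand has the shape $s^{r-n-1}$ with exponent strictly less than $-1$, one gets
\[
\int_{t}^{1} \om(s)\, s^{-n-1}\, ds \;\lesssim\; \int_{t}^{1} s^{r-n-1}\, ds \;\lesssim\; t^{r-n}
\]
for all $t \in (0,1]$. Since $t^{r-n} \geq 1$ on this range, the $\max$ in (\ref{eq:eq67}) is absorbed and we get the uniform estimate $\xi(t) \lesssim t^{r-n}$ on $(0,1]$.

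Second, I would derive the matching lower bound on $\om$. The almost-decreasing property (\ref{eq:eq42}), applied with reference point $1$ and scaling $s = 1/t > 1$ for $t \in (0,1]$, gives $\om(1) < C\, t^{-q}\, \om(t)$, that is, $\om(t) \gtrsim t^{q}$ on $(0,1]$. Combining the two estimates yields
\[
\omt(t) \;=\; \frac{\om(t)}{\xi(t)} \;\gtrsim\; \frac{t^{q}}{t^{r-n}} \;=\; t^{q+n-r} \;=\; t^{q'},
\]
which is the desired bound.

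I do not foresee a genuine obstacle here: the argument is elementary and self-contained, and the only care needed is in bookkeeping the parameters. The hypothesis $r<n$ from Definition \ref{df1} is used twice (to make the integral diverge algebraically at $0$ with exponent $r-n$, and to absorb the constant $1$ in $\xi$); the hypothesis $n<q<n+1$ provides the matching lower bound $\om(t) \gtrsim t^{q}$; and the auxiliary condition $r+1>q$ imposed just before the lemma is exactly what keeps the exponent $q' = q+n-r$ below $n+1$, so that $q'$ remains admissible as an almost-decreasing exponent when the estimate is used in the sequel.
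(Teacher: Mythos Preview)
Your proof is correct and follows essentially the same approach as the paper's own argument: both bound $\xi(t)\lesssim t^{r-n}$ via the almost-increasing property (\ref{eq:eq32}), bound $\om(t)\gtrsim t^{q}$ via the almost-decreasing property (\ref{eq:eq42}), and then divide. Your handling of the $\max$ with $1$ in the definition of $\xi$ is in fact slightly more careful than the paper's, which simply writes $\xi(x)\approx\int_x^1\omega(t)t^{-n-1}dt$ without comment.
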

  \begin{proof}
   Consider the function $\xi(x)$ from (\ref{eq:eq67}). By almost increasing property (\ref{eq:eq32}), one has
  \[\xi(x) \approx \int_x^1 \omega(t)t^{-n-1}dt=\int_x^1\frac{\omega(t)}{t^{r}} t^{r-n-1}dt \lesssim x^{r-n}.\]
  Therefore, by almost decreasing  property (\ref{eq:eq42}),
  \[\omt(x)=\frac{\om(x)}{\xi(x)}\gtrsim \om(x) x^{n-r}\gtrsim x^{q+n-r},\]
  that is required.
  \end{proof}

By Lemma \ref{lem47}, for the first summand in (\ref{eq:eq63}) we have
 \[ \rho(x_0)^{q'-n-1}\lesssim \frac{\omt(\rho(x_0))}{\rho(x_0)^{n+1}}\]
 and for the second one it simply holds
\[  \frac{\om(\rho(x_0))}{\rho(x_0)}\lesssim  \frac{\omt(\rho(x_0))}{\rho(x_0)^{n-r+1}} \]
\[ \lesssim   \frac{\omt(\rho(x_0))}{\rho(x_0)^{n+1}}.\]
Combining, by the Taylor's formula we obtain  point (2) of  Theorem \ref{thm3}. The proof of Theorem \ref{thm4} is completed.





\bibliographystyle{elsarticle-num}

\begin{thebibliography}{00}


\bibitem{An}
D.S. Anikonov, \textit{On the boundedness of a singular integral operator in the space $ C^{\alpha}(\overline G),$}
 Math. USSR-Sb. 33(4)(1977), 447-464.
\bibitem{CT}
V. Cruz, X. Tolsa, \textit {Smoothness of the Beurling transform in Lipschitz domains}. J. Funct. Anal. 262(10)(2012), 4423-4457.

\bibitem{I}
 T. Iwaniec, \textit{The best constant in a BMO-inequality for the Beurling-
Ahlfors transform}, Mich. Math. J. 34 (1987), 407-434.
\bibitem{Ja2}
S. Janson, \textit{Generalizations of Lipschitz spaces and an application to Hardy spaces and bounded mean oscillation.}
 Duke Math. J. 47(1980) 959-982.


\bibitem{KK}
 S. Kislyakov, N. Kruglyak, \textit{Extremal problems in interpolation theory, Whitney--Besicovitch
coverings, and singular integrals.}  Birkhauser/Springer, Basel AG, 2013.

\bibitem{MOV}
J. Mateu, J. Orobitg and J. Verdera, \textit{Extra cancellation of even Calder\'{o}n-Zygmund operators
and quasiconformal mappings,} J. Math. Pures Appl. (9) 91 (2009), no. 4, 402-431.

\bibitem{Mikh}
S.G. Mikhlin, \textit{Multidimensional singular integrals and integral equations.}   Pergamon Press, Oxford,  1965


\bibitem{Pr}
M. Prats, \textit{	Sobolev regularity of the Beurling transform on planar domains} Publ. Mat. 61 (2017)  291-336	

\bibitem{PT}
M. Prats, X. Tolsa, \textit{A T(P) theorem for Sobolev spaces on domains,} J. Funct. Anal.
268 (2015) 2946-2989.
\bibitem{T}
X. Tolsa,  \textit{Regularity of C1 and Lipschitz domains in terms of the Beurling transform}. J. Math. Pures Appl. 100.2 (2013): 137-165.



\bibitem{V1}
A.V.Vasin,   \textit{A T1 theorem and Calder\'{o}n–Zygmund operators in Campanato spaces on domains.}
Math. Nachr.  292(2019), 1392-1407.



\bibitem{VD}

A.V.Vasin,  E. Doubtsov, \textit{A T(P) theoremA for Zygmund spaces on domains.}
Math. Notes, 114 (2023),  1, 30-45.

\end{thebibliography}



\end{document}